\newtheorem{theorem}{Theorem}
\newtheorem{proposition}{Proposition}
\newtheorem{lemma}{Lemma}
\newtheorem{corollary}{Corollary}
\theoremstyle{definition}
\newtheorem{definition}{Definition}
\def\Var{\mathrm{Var}}
\def\di{\mathrm{di}}
\def\Perm{\mathrm{Perm}}
\def\SLC{\mathrm{SLC}}
\def\Nov{\mathrm{Nov}}
\def\Der{\mathrm{Der}}
\def\wt{\mathop{\fam 0 wt}\nolimits}
\def\Magma{\mathrm{Magma}}
\numberwithin{equation}{section}
\title[Novikov dialgebras and perm algebras]{Novikov dialgebras and perm algebras}
\author{F. Mashurov, B. K. Sartayev}
\thanks{This work was supported by the Program of MES RK}
\address{Sobolev Institute of Mathematics, Novosibirsk, Russia}
\address{Suleyman Demirel University, Kaskelen, Kazakhstan}
\keywords{Free algebras, Perm algebras, polynomial identities, Novikov dialgebras}
\begin{document}

\begin{abstract}
In this paper, we consider Perm algebra with the
derivation $d$. The algebra itself is equipped with the new operation $a\succ b = d(a) b$.
We construct a linear basis of the free Novikov dialgebra in terms of new operations. Also, we prove that the class of algebras under the new operation form a variety.
Finally, we find the defining identities of the variety.
\end{abstract}

\maketitle

\section{Introduction}

An associative algebra with the left-commutative identity is called perm algebra \cite{Perm 1}. In \cite{MS2022} subalgebras of the perm algebra under the commutator and the anti-commutator are considered. It turned out that every metabelian Lie algebra can be embedded into perm algebra under the commutator.
In \cite{KS2022} was considered a subalgebra of perm algebra with derivation under the operation $x_1\prec x_2=x_1 d(x_2)$. We denote the perm algebra with derivation and its subalgebra under the operation $\prec$ by $\Der\Perm\<X\>$ and $\Der\Perm_{\prec}\<X\>$, respectively. It turn out that $\Der\Perm_{\prec}\<X\>$ is a pre-Lie algebra with additional identities of the following form:
\begin{gather}
    ((x_1\prec x_2)\prec x_3)\prec x_4 = ((x_1\prec x_3)\prec x_2)\prec x_4 ,\label{eq:SLS-1} \\
    (x_1,x_2\prec x_3, x_4) = (x_2,x_1\prec x_3, x_4). \label{eq:SLS-2}
\end{gather}
where $(x_1, x_2, x_3)$ stands for the associator $(x_1\prec x_2)\prec x_3-x_1\prec(x_2\prec x_3)$. Moreover, there is proved that every Novikov dialgebra can be embedded into perm algebra with derivation as follows:
$$x_1\vdash x_2 = x_1 d(x_2),\;\;\; x_1\dashv x_2 = d(x_2) x_1.$$

In this paper, we consider a new operation $\succ$ on $\Der\Perm\<X\>$ algebra which is defined as follows:
$$x_1\succ x_2 = d(x_1) x_2.$$
We denote that algebra by $\Der\Perm_{\succ}\<X\>$.
In our case the operations $\vdash$ and $\dashv$ correspond to the operations $\prec$ and $\succ$, respectively. The main aim of the paper is to describe the subalgebra of $\Der\Perm\<X\>$ under the operation $\succ$.

Let us call that an algebra $A$ is special if it can be embedded into some algebra from fixed class of algebras, otherwise, it is called exceptional. It was proved that there exists pre-Lie algebra with additional identities (\ref{eq:SLS-1}) and (\ref{eq:SLS-2}) that is exceptional i.e. can not be embedded into the class of $\Der\Perm$ %_{\prec}
algebras.  For the case of the class of $\Der\Perm_{\succ}$ algebras, we have the opposite result. In addition, it turns out that the class of $\Der\Perm_{\succ}$ algebras forms a variety. Moreover, we describe the defining identities of that variety and construct a monomial basis of free $\Der\Perm_{\succ}$ algebra. The analogical result can be traced in the class of subalgebras of commutative and Poisson algebras with derivation \cite{Bokut2}, \cite{KS22expmath}.

The main motivation of this work comes from the theory of operads. It is well-known that for the fixed binary quadratic operad denoted by $\Var$, we can define the Manin white product with operad $\Nov$ governed by the variety of Novikov algebras and obtain the operad $\Der\Var$ \cite{KSO2019}. That operad is obtained from operad $\Var$ under the operations $\succ$ and $\prec$ which are defined as follows:
$$x_1\succ x_2=d(x_1) x_2,\;\;\;x_1\prec x_2=x_1 d(x_2).$$
In the other words, we have
$$\Var\circ\Nov=\Der\Var.$$
On the other hand \cite{Loday2001}, \cite{Vallette},
$$\Perm\circ\Var=\di\textrm{-}\Var.$$
In this paper and \cite{KS2022}, a special case is considered. That is
$$\Perm\circ\Nov=\di\textrm{-}\Nov=\Der\Perm.$$

Let us now pay close attention to Novikov dialgebra.
Every dialgebra
satisfies to the {\em 0-identities}:
\begin{equation}\label{eq:zero-id}
\begin{gathered}
(x_1\dashv x_2)\vdash x_3 = (x_1\vdash x_2)\vdash x_3,
\\
x_1\dashv (x_2\vdash x_3) = x_1\dashv (x_2\dashv x_3) .
\end{gathered}
\end{equation}

\begin{definition}
The Novikov dialgebra is defined by \eqref{eq:zero-id} together with
\begin{equation}\label{eq:diLSym}
 \begin{gathered}
(x_1\vdash x_2)\vdash x_3 - x_1\vdash (x_2\vdash x_3)
= (x_2\vdash x_1)\vdash x_3 - x_2\vdash (x_1\vdash x_3),\\
(x_1\dashv x_2)\dashv x_3 = (x_1\dashv x_3)\dashv x_2,
 \end{gathered}
\end{equation}
\begin{equation}\label{eq:diRCom}
 \begin{gathered}
(x_1\dashv x_2)\dashv x_3 - x_1\dashv (x_2\dashv x_3)
= (x_2\vdash x_1)\dashv x_3 - x_2\vdash (x_1\dashv x_3),\\
(x_1\vdash x_2)\dashv x_3 = (x_1\vdash x_3)\vdash x_2.
 \end{gathered}
\end{equation}
\end{definition}

We see that the Novikov dialgebra is left-symmetric and right-commutative under the operations $\vdash$ and $\dashv$, respectively. Since a free basis for these algebras has been constructed (for example, see \cite{Bokut1}), it is not difficult to construct a basis of the free algebra with identities (\ref{eq:diLSym}). At times it is difficult to solve the same problem for algebras with complementary identities that contains both multiplications as (\ref{eq:zero-id}) and (\ref{eq:diRCom}). For the free Novikov dialgebras, we solve this problem in terms of $\succ$ and $\prec$.

\section{Main results}
Firstly, let us recall a free basis of $\Perm\<X\>$ algebra. That is monomials of the following form:
\begin{equation}\label{permmonom}
x_{i_1}\cdots x_{i_{n-1}}x_{i_n},
\end{equation}
where $i_1\leq\ldots\leq i_{n-1}$. Since the generators of $\Der\Perm\<X\>$ are
$$X^{(\omega)}=X\cup X'\cup X''\cdots\cup X^{(n)}\cup\cdots,$$
every monomial $a\in\Der\Perm\<X\>$ can define written as follows:
\begin{equation}\label{derpermmonom}
a=x_{i_1}^{(k_1)}\cdots x_{i_{n-1}}^{(k_{n-1})}x_{i_n}^{(k_n)}.
\end{equation}

To state the main results let us recall the crucial definition that used in \cite{DzhLofwall2002}, \cite{KSO2019}:
\begin{definition}
The weight function is defined on monomials of $\Der\Perm\<X\>$ as follows:
\[
\wt:\Der\Perm\<X\>\rightarrow \mathbb{Z},
\]
\[
\wt(x^{(j)}) =j-1,\quad x\in X,\ j\ge 0,
\]
\[
\wt(uv) = \wt(u)+\wt(v)
\]
for all monomials $u,v\in \Der\Perm\<X\>$.
\end{definition}

\begin{theorem}\label{criterion}
Let $a$ be a monomial of $\Der\Perm\<X\>$ of the form (\ref{derpermmonom}). A monomial $a\in\Der\Perm_{\succ}\<X\>$ if and only if $\wt(a)=-1$ and $k_n=0$.
\end{theorem}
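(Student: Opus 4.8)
The plan is to prove both directions by analyzing how the operation $\succ$ interacts with the weight function and with the "last letter" structure of perm monomials. First I would record the basic observation that for $x_1 \succ x_2 = d(x_1)x_2$, applying the derivation raises the order of a letter by one, so $\wt(x_1 \succ x_2) = \wt(x_1) + 1 + \wt(x_2) = \wt(x_1) + \wt(x_2) + 1$; iterating, any product of $n$ generators built from $\succ$ has weight $\wt(\text{product of the underlying generators}) + (n-1)$. Since each generator $x \in X$ has $\wt(x) = -1$, a monomial in $\Der\Perm_{\succ}\<X\>$ that is a product of $n$ generators has weight $-n + (n-1) = -1$. For the condition $k_n = 0$: in the perm monomial basis (\ref{permmonom}), the last letter $x_{i_n}$ is the unique letter that never sits in the "left-commutative prefix"; under $a \succ b = d(a)b$, the derivation $d$ is always applied to the left argument $a$, so it is applied to letters coming from every factor \emph{except} possibly the very last letter of $b$. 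I would make this precise by induction on the number of $\succ$-operations: in $u \succ v$, writing $v$ in perm form with last letter of order $k$, the product $d(u)v$ has the same last letter of order $k$, and by induction on $v$ we get $k = 0$. Thus every monomial of $\Der\Perm_{\succ}\<X\>$ satisfies $\wt(a) = -1$ and $k_n = 0$, which is the "only if" direction.

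For the "if" direction, suppose $a = x_{i_1}^{(k_1)}\cdots x_{i_{n-1}}^{(k_{n-1})} x_{i_n}^{(0)}$ is a perm monomial with $\wt(a) = -1$, so $\sum_{j=1}^{n-1}(k_j - 1) + (0 - 1) = -1$, i.e. $\sum_{j=1}^{n-1} k_j = n - 1$. I would construct $a$ explicitly as a $\succ$-word. The idea is that each differentiation $d$ appearing in a letter $x_{i_j}^{(k_j)}$ must be "produced" by using that letter as the left argument of some $\succ$. Since there are exactly $n-1$ differentiations distributed among the first $n-1$ letters, and a $\succ$-word on $n$ generators contains exactly $n-1$ occurrences of $\succ$ (hence $n-1$ applications of $d$), the count matches. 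Concretely, I would build $a$ by starting from the last letter $x_{i_n}$ and successively left-multiplying: if we want letter $x_{i_j}^{(k_j)}$, we apply the operation $\succ$ with left argument a suitable subword whose total number of $d$'s acting "downstream" equals $k_j$. The cleanest route is probably an induction on $n$ (or on $\wt$ offset / total number of derivatives), peeling off one letter at a time and invoking left-commutativity of the perm product to reorder the prefix $x_{i_1}\leq\cdots\leq x_{i_{n-1}}$ into whatever order the recursive construction demands.

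The main obstacle I anticipate is the bookkeeping in the "if" direction: showing that \emph{every} admissible distribution of $n-1$ derivatives over the first $n-1$ letters is actually realizable by \emph{some} bracketing of $\succ$. A single $\succ$ applies $d$ only to the leftmost letter of its left argument's "active part," so to put $k_j \geq 2$ derivatives on a single letter $x_{i_j}$ one must nest that letter as the left argument repeatedly — which forces other letters to be consumed in between — and one has to check this can always be arranged consistently with the perm normal form. I would handle this by setting up an explicit recursive formula: writing $a = (x_{i_1}^{(k_1 - 1)} \cdots) \succ (\text{rest})$ is wrong since $\succ$ applies $d$ once; instead, I expect the right move is to induct by removing the \emph{first} letter $x_{i_1}$, writing $a$ as an iterated $\succ$-expression of the form $\big(\cdots(x_{i_1} \succ w_1)\succ w_2 \cdots\big)$ where the $w_i$ are $\succ$-monomials on the remaining letters, chosen so that $x_{i_1}$ gets hit by $d$ exactly $k_1$ times. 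Left-commutativity of perm guarantees that the order in which the remaining letters are absorbed does not matter for the final monomial, and an induction on $n$ closes the argument. Verifying the base cases $n = 1$ ($a = x$, $\wt = -1$, $k_1 = 0$ — trivially a generator) and $n = 2$ ($a = x' y = x \succ y$) is immediate.
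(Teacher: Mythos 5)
Your ``only if'' direction is fine and in fact more detailed than the paper's (which dismisses that direction as clear): weight additivity under $\succ$ plus the observation that the last letter of $v$ survives undifferentiated in every Leibniz term of $d(u)v$ gives both conditions. The gap is in the ``if'' direction. Your plan is an induction on $n$ that ``peels off one letter at a time,'' but the obstacle you yourself flag is never resolved, and your working model of $\succ$ is wrong at the crucial point: you write that a single $\succ$ applies $d$ only to the leftmost letter of its left argument's active part, whereas $d$ is a derivation, so $u\succ v=d(u)v$ is a \emph{sum} over all letters of $u$, each acquiring one extra derivative in turn. Consequently any $\succ$-word you build whose leading term is $a=x_{i_1}^{(k_1)}\cdots x_{i_{n-1}}^{(k_{n-1})}x_{i_n}$ comes with correction terms that are again monomials of the \emph{same} length $n$, only with the $n-1$ derivatives redistributed among the letters differently. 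Those terms are not reachable by an induction on $n$ alone, and you give no secondary induction or well-founded order that makes the rewriting terminate. ``Left-commutativity guarantees the order of absorption does not matter'' does not address this: the problem is not the order of the letters but the distribution of the derivatives.

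The paper closes exactly this hole with a two-case descent. If some letter has derivation degree exactly $1$, left-commutativity moves it next to the last letter, and $x_{i_r}'x_{i_n}=x_{i_r}\succ x_{i_n}$ is treated as a single new generator of $\Der\Perm_{\succ}\<X\>$, so the length genuinely drops and the induction on $n$ applies. If no letter has degree $1$, one picks a letter of minimal derivation degree $k_r\ge 2$, subtracts the expression $(\cdots(x_{i_r}\succ x_{i_{r+1}})\succ\cdots)\succ x_{j_{r+k_r}}$ whose leading Leibniz term reproduces $x_{i_r}^{(k_r)}x_{i_{r+1}}\cdots x_{j_{r+k_r}}$, and observes that every correction term carries strictly smaller derivation degrees on the affected letters, so iterating eventually produces a letter of degree $1$ and reduces to the first case. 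You need some version of this descent (or an explicit well-ordering on derivative distributions, in the spirit of the paper's der-lex order) before your ``if'' direction is an actual proof rather than a statement of intent.
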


Using Theorem \ref{criterion}, we obtain the dimension of operad $\Der\Perm_{\succ}$:
\begin{corollary}
  $\dim(\Der\Perm_{\succ}(n))=n\binom{2n-3}{n-1}$.
\end{corollary}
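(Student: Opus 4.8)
The plan is to identify $\Der\Perm_{\succ}(n)$ with the multilinear component of the free algebra $\Der\Perm_{\succ}\<X\>$ on $X=\{x_1,\dots,x_n\}$, and then simply to count the basis monomials that Theorem~\ref{criterion} places in it. Since the monomials (\ref{derpermmonom}) with $i_1\le\cdots\le i_{n-1}$ form a linear basis of $\Der\Perm\<X\>$ and $\Der\Perm_{\succ}\<X\>$ is a subalgebra of $\Der\Perm\<X\>$, those basis monomials that happen to lie in $\Der\Perm_{\succ}\<X\>$ are automatically linearly independent; by Theorem~\ref{criterion} these are exactly the ones with $\wt(a)=-1$ and $k_n=0$, and since they span $\Der\Perm_{\succ}\<X\>$ they form a basis of it. Intersecting with the multilinear part then gives a basis of $\Der\Perm_{\succ}(n)$, so everything reduces to a count.

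First I would put a multilinear monomial into the normal form $a=x_{i_1}^{(k_1)}\cdots x_{i_{n-1}}^{(k_{n-1})}x_{i_n}^{(k_n)}$, where $\{i_1,\dots,i_n\}=\{1,\dots,n\}$ and $i_1<\cdots<i_{n-1}$ (strict, by multilinearity), while the last index $i_n$ is unconstrained. Such a monomial is completely determined by the choice of $i_n$ together with the exponent vector $(k_1,\dots,k_n)$, because the remaining $n-1$ variables are then forced to occur in increasing index order; there are $n$ choices for $i_n$.

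Next I would rewrite the two conditions of Theorem~\ref{criterion}. By definition of $\wt$ we have $\wt(a)=\sum_{j=1}^{n}(k_j-1)=\bigl(\sum_{j=1}^{n}k_j\bigr)-n$, so $\wt(a)=-1$ is equivalent to $\sum_{j=1}^{n}k_j=n-1$; combined with $k_n=0$ this becomes $k_1+\cdots+k_{n-1}=n-1$ with every $k_j\ge 0$. The number of such exponent vectors is the number of weak compositions of $n-1$ into $n-1$ parts, namely $\binom{(n-1)+(n-1)-1}{(n-1)-1}=\binom{2n-3}{n-2}=\binom{2n-3}{n-1}$. Multiplying by the $n$ choices of $i_n$ yields $\dim(\Der\Perm_{\succ}(n))=n\binom{2n-3}{n-1}$.

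The computation itself is just a stars-and-bars argument, so the only point that needs genuine care is the bookkeeping absorbed in the first paragraph: that the monomials selected by Theorem~\ref{criterion} really span $\Der\Perm_{\succ}\<X\>$, and not merely sit inside it, so that the multilinear count is exact. I expect this to follow directly from the way $\Der\Perm_{\succ}\<X\>$ is produced in the proof of that theorem, at worst after a short reduction of an arbitrary element of the subalgebra to the normal form (\ref{derpermmonom}).
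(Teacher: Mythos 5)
Your proposal is correct and is exactly the derivation the paper intends: the corollary is stated as an immediate consequence of Theorem~\ref{criterion}, and your argument fills in the routine counting (choice of the last variable times the stars-and-bars count $\binom{2n-3}{n-2}=\binom{2n-3}{n-1}$ of exponent vectors with $k_n=0$ and total weight $-1$). The spanning/independence point you flag at the end is indeed supplied by the first paragraph of the paper's proof of Theorem~\ref{criterion}, so nothing is missing.
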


\begin{theorem}\label{directsum}
Every element of the free Novikov dialgebra can be written in a unique way as a sum of elements $\Der\Perm_{\succ}\<X\>$ and $\Der\Perm_{\prec}\<X\>$, that is, 
$$\di\textrm{-}\Nov\<X\>=\Der\Perm_{\succ}\<X\>\oplus\Der\Perm_{\prec}\<X\>.$$
as a vector space.
\end{theorem}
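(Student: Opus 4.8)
The plan is to realise all three algebras as subspaces of $\Der\Perm\<X\>$ and to separate the two summands by the weight function. By the embedding theorem of \cite{KS2022} (equivalently, by the operad identity $\Perm\circ\Nov=\di\textrm{-}\Nov=\Der\Perm$) I would first identify $\di\textrm{-}\Nov\<X\>$ with the subalgebra $T\subseteq\Der\Perm\<X\>$ generated by $X$ under the operations $x_1\vdash x_2=x_1d(x_2)$ and $x_1\dashv x_2=d(x_2)x_1$. Since $x_1\vdash x_2=x_1\prec x_2$ and $x_1\dashv x_2=x_2\succ x_1$, a subspace of $\Der\Perm\<X\>$ is closed under $\vdash$ and $\dashv$ precisely when it is closed under $\prec$ and $\succ$; hence $T$ is the smallest subspace of $\Der\Perm\<X\>$ that contains $X$ and is closed under both $\succ$ and $\prec$. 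In particular $T$ contains both $\Der\Perm_{\succ}\<X\>$ and $\Der\Perm_{\prec}\<X\>$, so that $\Der\Perm_{\succ}\<X\>+\Der\Perm_{\prec}\<X\>\subseteq T$.

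For the reverse inclusion I would exploit the weight grading. The defining relations of $\Der\Perm\<X\>$ are weight-homogeneous, so $\Der\Perm\<X\>$ is graded by weight and $d$ is homogeneous of weight $1$; consequently $\wt(a\succ b)=\wt(a\prec b)=\wt(a)+\wt(b)+1$, and since $\wt(x)=-1$ for $x\in X$ an easy induction gives $T\subseteq V$, where $V$ denotes the weight-$(-1)$ component of $\Der\Perm\<X\>$. Now decompose $V=V_{0}\oplus V_{\ge 1}$ according to whether the last letter $x_{i_n}^{(k_n)}$ of a basis monomial has $k_n=0$ or $k_n\ge 1$; this is a direct sum of spans of disjoint families of basis monomials of $\Der\Perm\<X\>$. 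Theorem \ref{criterion}, together with the monomial basis of $\Der\Perm_{\succ}\<X\>$ constructed in the paper, gives $\Der\Perm_{\succ}\<X\>=V_{0}$, while the companion description of $\Der\Perm_{\prec}\<X\>$ from \cite{KS2022} (weight $-1$, last letter differentiated) gives $\Der\Perm_{\prec}\<X\>=V_{\ge 1}$. Therefore $\Der\Perm_{\succ}\<X\>+\Der\Perm_{\prec}\<X\>=V$, and combining this with the inclusions above,
$$V=\Der\Perm_{\succ}\<X\>+\Der\Perm_{\prec}\<X\>\subseteq T\subseteq V,$$
so that all inclusions are equalities; the sum is direct because $V_{0}\cap V_{\ge 1}=0$, and transporting this back along the identification $\di\textrm{-}\Nov\<X\>\cong T$ yields $\di\textrm{-}\Nov\<X\>=\Der\Perm_{\succ}\<X\>\oplus\Der\Perm_{\prec}\<X\>$.

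The only non-formal ingredients here are the two exact identifications $\Der\Perm_{\succ}\<X\>=V_{0}$ and $\Der\Perm_{\prec}\<X\>=V_{\ge 1}$. The first is Theorem \ref{criterion}, upgraded from ``which monomials lie in the subalgebra'' to ``the subalgebra equals the span of those monomials'' by means of the monomial basis. The second I would quote from \cite{KS2022} or, if it is not available there in this precise form, re-derive by repeating the proof of Theorem \ref{criterion} with $\succ$ replaced by $\prec$: the easy half ($\Der\Perm_{\prec}\<X\>\subseteq V_{\ge 1}$) is immediate from the fact that $\prec$ differentiates its right argument, so the real content is showing that \emph{every} weight-$(-1)$ monomial with differentiated final letter is actually obtained. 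I expect this surjectivity half to be the main obstacle; the embedding, the weight bookkeeping, and the sandwich argument are all routine.
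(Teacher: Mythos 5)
Your proposal is correct and takes essentially the same route as the paper: the paper likewise identifies $\di$-$\Nov\<X\>$ with the weight-$(-1)$ monomial subspace $P_{(-1)}$ of $\Der\Perm\<X\>$, splits it as $P_{(-1,k_n=0)}\oplus P_{(-1,k_n\neq 0)}$ according to whether the last letter is differentiated, and identifies the two summands with $\Der\Perm_{\succ}\<X\>$ (Theorem \ref{criterion}) and $\Der\Perm_{\prec}\<X\>$ (the corresponding fact quoted from \cite{KS2022}). The only cosmetic difference is your explicit sandwich $V\subseteq T\subseteq V$, which the paper replaces by directly citing that the weight-$(-1)$ monomials form a linear basis of $\di$-$\Nov\<X\>$ inside $\Der\Perm\<X\>$.
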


\begin{theorem}\label{Cohn}
Every homomorphic image of $\Der\Perm_{\succ}\<X\>$ is special.
\end{theorem}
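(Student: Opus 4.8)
The plan is to realise the given homomorphic image as a $\succ$-subalgebra of $P^{(\succ)}$ for a suitable perm algebra $P$ with derivation. Write the image as $A=\Der\Perm_{\succ}\<X\>/I$ with $I$ an ideal of the $\succ$-algebra $\Der\Perm_{\succ}\<X\>$, and view $I$ inside $\Der\Perm\<X\>$ through the embedding of Theorem~\ref{criterion}. Let $\bar I$ be the smallest subspace of $\Der\Perm\<X\>$ containing $I$ and closed under $d$ and under left and right multiplication by arbitrary elements, i.e.\ the ideal of the differential perm algebra $\Der\Perm\<X\>$ generated by $I$, and put $P=\Der\Perm\<X\>/\bar I$ with its induced derivation. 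Since $\Der\Perm_{\succ}\<X\>\hookrightarrow\Der\Perm\<X\>$ is a homomorphism of $\succ$-algebras and $I\subseteq\bar I$, the composite $\Der\Perm_{\succ}\<X\>\to P^{(\succ)}$ kills $I$ and so factors through a $\succ$-homomorphism $\bar\varphi\colon A\to P^{(\succ)}$. As $P^{(\succ)}$ is a $\Der\Perm_{\succ}$ algebra, it suffices to show $\bar\varphi$ is injective, i.e.\ that
$$\bar I\cap\Der\Perm_{\succ}\<X\>=I.$$

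The inclusion $\supseteq$ being immediate, everything rests on the reverse one. First I would exploit the weight grading: $d$ raises $\wt$ by $1$ and multiplication is additive in $\wt$, so $\bar I=\bigoplus_m\bar I_m$ is a graded subspace, and since $I$ and $\Der\Perm_{\succ}\<X\>$ both sit entirely in weight $-1$ (Theorem~\ref{criterion}), only $\bar I_{-1}$ matters; moreover $\Der\Perm\<X\>_{-1}$ splits, as a span of monomials, into $\Der\Perm_{\succ}\<X\>$ (undifferentiated last letter) plus its complement, so it is enough to prove that the $\Der\Perm_{\succ}$-component of every weight-$(-1)$ element of $\bar I$ lies in $I$. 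Next I would write down an explicit spanning set of $\bar I$: terms $d^{\,n}(M)$, where $M$ is a left-normed perm product of powers $d^{k}(x_j)$ of generators in which exactly one factor is replaced by a ``seed'' $i\in I$. For such a term in weight $-1$, the claim is proved by rewriting: the Leibniz rule $d^{n}(uv)=\sum_t\binom{n}{t}d^{t}(u)d^{n-t}(v)$ is used to push all the derivatives onto the seed while the surrounding letters stay undifferentiated; associativity and left-commutativity of the perm product let one regroup and reorder those letters into $\succ$-monomials; and the defining relation $a\succ b=d(a)\,b$ then converts the resulting undifferentiated-last-letter monomials into $\succ$-products one of whose factors comes from $I$, hence into elements of $I$ because $I$ is an ideal. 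Summing over the spanning set and using graded-ness yields $\bar I\cap\Der\Perm_{\succ}\<X\>\subseteq I$.

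Granting the purity, $\bar\varphi\colon A\hookrightarrow P^{(\succ)}$ exhibits $A$ as a $\Der\Perm_{\succ}$ algebra, so $A$ is special. The same construction works for $\Der\Perm_{\succ}\<Y\>$ for any set $Y$; combined with the evident closure of the class of $\Der\Perm_{\succ}$ algebras under subalgebras and Cartesian products and with the fact that every $\Der\Perm_{\succ}$ algebra is a homomorphic image of a free one, this simultaneously reproves that the class of $\Der\Perm_{\succ}$ algebras is a variety.

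The main obstacle is exactly the purity statement $\bar I\cap\Der\Perm_{\succ}\<X\>=I$. The weight grading removes everything outside weight $-1$ at once, but within weight $-1$ one really needs a normal-form analysis of the ideal $\bar I$ in $\Der\Perm\<X\>$: because the Leibniz rule spreads a single seed over many monomials, the bookkeeping that keeps returning all the derivatives to the seed — and never leaves one stranded with a differentiated last letter that cannot be absorbed back via $\succ$ — is the delicate point. Here the explicit monomial bases of $\Der\Perm\<X\>$, of $\Der\Perm_{\succ}\<X\>$, and of $\di\textrm{-}\Nov\<X\>=\Der\Perm_{\succ}\<X\>\oplus\Der\Perm_{\prec}\<X\>$ from Theorems~\ref{criterion} and~\ref{directsum} are what make the rewriting manageable and, in particular, guarantee that the bad monomials (positive or too-negative weight, or differentiated last letter) never contaminate the $\Der\Perm_{\succ}$-component.
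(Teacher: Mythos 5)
Your proposal follows essentially the same route as the paper: it is Cohn's criterion made explicit, with the whole burden placed on the purity statement $\{I\}\cap\Der\Perm_{\succ}\<X\>=I$, which both you and the paper settle by restricting to weight $-1$ and rewriting every monomial containing a seed from $I$ into $\succ$-form with that seed as a factor. The only real difference is mechanical: the paper performs the rewriting not by literally pushing all derivatives onto the seed via the Leibniz rule, but by re-running the induction of Theorem~\ref{criterion} with the generators of $I$ adjoined as new letters of weight $-1$, which is the cleaner way to carry out exactly the step you flag as delicate.
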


Using Theorem \ref{Cohn}, we obtain the following result:

\begin{corollary}
  The class of $\Der\Perm_{\succ}\<X\>$ algebras form a variety.
\end{corollary}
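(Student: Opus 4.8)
The plan is to derive the corollary directly from Theorem~\ref{Cohn} by invoking a standard criterion from universal algebra: a class of algebras closed under subalgebras, homomorphic images, and direct products (an HSP class) is a variety, i.e.\ is axiomatizable by identities. We already know that the class of $\Der\Perm_{\succ}$ algebras is closed under subalgebras (a subalgebra of $\Der\Perm_{\succ}\<X\>$ is again a subalgebra of some $\Der\Perm\<X\>$ under $\succ$, since one can restrict the derivation and the product), and it is closed under arbitrary direct products (the product of perm algebras with derivation is again a perm algebra with derivation, and the operation $\succ$ is defined componentwise). Theorem~\ref{Cohn} supplies the missing ingredient: every homomorphic image of a free $\Der\Perm_{\succ}$ algebra is special, i.e.\ embeds into some $\Der\Perm_{\succ}\<Y\>$, hence lies in the class.

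Concretely, I would argue as follows. Let $\mathcal{M}$ denote the variety generated by the class of all $\Der\Perm_{\succ}$ algebras, that is, the smallest variety containing this class; equivalently $\mathcal{M}$ is the class of all algebras satisfying every multilinear identity that holds in every $\Der\Perm_{\succ}\<X\>$. By Birkhoff's theorem, $\mathcal{M} = HSP(\{\Der\Perm_{\succ}\text{ algebras}\})$. Since the class of $\Der\Perm_{\succ}$ algebras is already closed under $S$ and $P$ as noted above, we have $\mathcal{M} = H(\{\Der\Perm_{\succ}\text{ algebras}\})$, the class of homomorphic images. Now the free algebra in $\mathcal{M}$ on a set $X$ is a quotient of the absolutely free (nonassociative) algebra on $X$ by the $T$-ideal of identities of the class; but that quotient is precisely $\Der\Perm_{\succ}\<X\>$, because an identity holds in all $\Der\Perm_{\succ}$ algebras if and only if it holds in the free one. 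Thus the $\mathcal{M}$-free algebra on $X$ is $\Der\Perm_{\succ}\<X\>$ itself. Any member of $\mathcal{M}$ is a homomorphic image of some $\mathcal{M}$-free algebra, hence a homomorphic image of some $\Der\Perm_{\succ}\<Y\>$, which by Theorem~\ref{Cohn} is special, hence a $\Der\Perm_{\succ}$ algebra. Therefore $\mathcal{M}$ coincides with the class of $\Der\Perm_{\succ}$ algebras, so that class is a variety.

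The only genuine point requiring care is the identification of the $\mathcal{M}$-free algebra with $\Der\Perm_{\succ}\<X\>$, and the closure under $S$ and $P$; I expect the $S$- and $P$-closure to be essentially immediate from the componentwise/restriction definition of $\succ$ and $d$, while the free-algebra identification is a formal consequence of the definition of a relatively free algebra. The main conceptual obstacle —- showing that no "exceptional" homomorphic image exists —- has already been dispatched by Theorem~\ref{Cohn}, so the corollary is essentially a packaging of that theorem together with Birkhoff's HSP characterization of varieties. I would state this cleanly, cite Birkhoff's theorem, and keep the verification of $S$- and $P$-closure to a sentence each.
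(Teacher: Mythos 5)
Your argument is correct and is exactly the intended one: the paper derives this corollary from Theorem~\ref{Cohn} without further comment, the implicit reasoning being precisely Cohn's criterion packaged with Birkhoff's HSP theorem — closure under $S$ and $P$ is automatic for a class of subalgebras of $\Der\Perm$-algebras under a derived operation, the relatively free algebra of the generated variety is identified with $\Der\Perm_{\succ}\<X\>$, and Theorem~\ref{Cohn} then closes the class under homomorphic images. Your write-up just makes explicit what the paper leaves to the reader.
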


Since the class of $\Der\Perm_{\succ}\<X\>$ algebras is a variety and thus defined by some set of identities, the interesting problem is to determine a list of defining identities of $\Der\Perm_{\succ}\<X\>$.
The next result answers that question:

\begin{theorem}\label{specialidentities}
% Every perm algebra under the operation $\succ$ satisfies to the following identities:
The defining identities of the variety $\Der\Perm_{\succ}$ are 
  \begin{equation}\label{id1}
    x_1\succ(x_2\succ x_3)=x_2\succ(x_1\succ x_3),
  \end{equation}
    \begin{equation}\label{id2}
        \langle x_1,x_2\succ x_3,x_4\rangle-\langle x_1,x_3, x_2\succ x_4\rangle=0,
  \end{equation}
    \begin{equation}\label{id3}
       \langle x_1, x_2,x_3\rangle\succ x_4 -\langle x_1,x_3,x_2\rangle\succ x_4=0,
  \end{equation}
  where $\langle a, b,c \rangle= (a\succ b)\succ c-a\succ(b\succ c).$
  
 % \begin{multline}\label{id3}
  %      ((x_1\succ x_2)\succ x_3)\succ x_4 -(x_1\succ x_3)\succ(x_2\succ x_4)-(x_2\succ x_3)\succ(x_1\succ x_4)=\\
   %     ((x_1\succ x_3)\succ x_2)\succ x_4 -(x_1\succ x_2)\succ(x_3\succ x_4)-(x_3\succ x_2)\succ(x_1\succ x_4)
  %\end{multline}
%\begin{multline}\label{id4}
%    (x_1\succ x_2)\succ((x_3\succ x_4)\succ x_5)-x_1\succ(x_2\succ((x_3\succ x_4)\succ x_5))\\
%    -x_3\succ(x_4\succ((x_1\succ x_2)\succ x_5))+x_1\succ(x_2\succ(x_3\succ (x_4\succ x_5)))=\\
%    (x_1\succ x_4)\succ((x_3\succ x_2)\succ x_5)-x_1\succ(x_4\succ((x_3\succ x_2)\succ x_5))\\
%    -x_3\succ(x_2\succ((x_1\succ x_4)\succ x_5))+x_1\succ(x_4\succ(x_3\succ (x_2\succ x_5)))
%  \end{multline}
\end{theorem}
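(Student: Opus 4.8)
The claim has two directions. First one must verify that the identities \eqref{id1}--\eqref{id3} hold in every $\Der\Perm_{\succ}\<X\>$ algebra; second, one must show that they are \emph{defining}, i.e. every algebra satisfying them embeds into some $\Der\Perm_{\succ}$, so that by Theorem \ref{Cohn} (and its corollary) the variety they cut out coincides with the variety generated by the $\Der\Perm_{\succ}\<X\>$.

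Let me think about the verification direction first. In $\Der\Perm$ we have $a\succ b = d(a)b$ where the underlying product is perm (associative, left-commutative: $abc = bac$). For \eqref{id1}: $x_1\succ(x_2\succ x_3) = d(x_1)\bigl(d(x_2)x_3\bigr)$, and by left-commutativity of the perm product this equals $d(x_2)\bigl(d(x_1)x_3\bigr) = x_2\succ(x_1\succ x_3)$. So \eqref{id1} is just left-commutativity. For \eqref{id3}, $\langle a,b,c\rangle = (a\succ b)\succ c - a\succ(b\succ c) = d(d(a)b)c - d(a)(d(b)c)$. Using the Leibniz rule $d(uv) = d(u)v + u\,d(v)$, the first term is $d^2(a)bc + d(a)d(b)c$; subtracting $d(a)d(b)c$ leaves $\langle a,b,c\rangle = d^2(a)\,bc$. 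Now apply left-commutativity to the perm monomial $d^2(a)bc$: swapping $b$ and $c$ (which are both in non-first position) gives $d^2(a)bc = d^2(a)cb$, hence $\langle a,b,c\rangle = \langle a,c,b\rangle$, and then $\succ$-multiplying by $x_4$ on the right preserves this, giving \eqref{id3}. For \eqref{id2} one computes $\langle x_1, x_2\succ x_3, x_4\rangle = d^2(x_1)(d(x_2)x_3)x_4$ and $\langle x_1,x_3,x_2\succ x_4\rangle = d^2(x_1)\,x_3\,(d(x_2)x_4)$; by associativity and left-commutativity of the perm product, both equal $d^2(x_1)\,d(x_2)\,x_3\,x_4$ (the factor $d(x_2)$ can be moved since only the rightmost position is distinguished), so their difference vanishes. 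Thus all three identities hold, and this step is essentially bookkeeping with the Leibniz rule plus left-commutativity.

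For the converse — that \eqref{id1}--\eqref{id3} are a \emph{complete} set of defining identities — the plan is the standard one: let $M$ be the variety defined by \eqref{id1}--\eqref{id3} and let $N$ be the variety generated by all $\Der\Perm_{\succ}$ algebras. The inclusion $N \subseteq M$ is what we just proved. For $M \subseteq N$ it suffices to show that the free $M$-algebra $M\<X\>$ on a set $X$ is itself special, i.e. embeds into some $\Der\Perm_{\succ}$; equivalently, that the natural surjection $M\<X\> \twoheadrightarrow \Der\Perm_{\succ}\<X\>$ induced by $x\mapsto x$ is injective. To do this I would: (i) produce a spanning set of $M\<X\>$ by using the identities \eqref{id1}--\eqref{id3} as rewriting rules to reduce an arbitrary bracketed $\succ$-monomial to a normal form; (ii) show this spanning set has cardinality at most $\dim \Der\Perm_{\succ}(n)$ in each arity $n$, i.e. at most $n\binom{2n-3}{n-1}$ by the Corollary to Theorem \ref{criterion}; (iii) conclude, since the surjection $M\<X\>\to\Der\Perm_{\succ}\<X\>$ can only decrease dimension, that it is an isomorphism and the normal-form spanning set is in fact a basis. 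A convenient route for (i)--(ii): one expects that \eqref{id1} lets every $\succ$-monomial be brought to a ``left-normed-on-the-outside'' shape, \eqref{id3} symmetrizes internal arguments, and \eqref{id2} reconciles the two shifted associators, so that the surviving monomials are in bijection with the perm-monomials of the form \eqref{derpermmonom} that Theorem \ref{criterion} characterizes (weight $-1$, last letter underived) written in $\succ$-notation. Alternatively, one can invoke Theorem \ref{directsum}: since $\di\text{-}\Nov\<X\> = \Der\Perm_{\succ}\<X\> \oplus \Der\Perm_{\prec}\<X\>$, the dimension of $\Der\Perm_{\succ}(n)$ is pinned down, and one only needs the upper bound on $\dim M(n)$.

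The main obstacle is step (i)--(ii): establishing that \eqref{id1}--\eqref{id3} suffice to reduce \emph{every} $\succ$-word to the proposed normal form, with no further relations needed. This is a confluence/diamond-lemma argument and the delicate point is handling words with deeply nested brackets, where applying \eqref{id2} or \eqref{id3} in different orders could a priori produce inequivalent normal forms; one must check the critical pairs (overlaps of the three rewriting rules) all resolve. Once the spanning set is shown to inject into the known basis of $\Der\Perm_{\succ}\<X\>$ from Theorem \ref{criterion}, the counting forces equality and the theorem follows. I would carry out the verification direction first (quick), then set up the free $M$-algebra, extract the rewriting system from \eqref{id1}--\eqref{id3}, prove termination and confluence, count the normal forms, and finally match against $n\binom{2n-3}{n-1}$ to close the argument.
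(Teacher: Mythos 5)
Your overall strategy --- check that \eqref{id1}--\eqref{id3} hold in $\Der\Perm_{\succ}$, then prove completeness by rewriting every word of the free algebra on these identities into a normal form matched against the known basis of $\Der\Perm_{\succ}\<X\>$ --- is exactly the paper's (the paper phrases it as showing $\SLC\<X\>\cong\Der\Perm_{\succ}\<X\>$, with $[P(X)]$ from Theorem \ref{newbasenov} as the target spanning set). However, your verification of \eqref{id3} contains a false intermediate claim. From $\langle a,b,c\rangle = d^2(a)bc$ you conclude $\langle a,b,c\rangle=\langle a,c,b\rangle$ by ``swapping $b$ and $c$, which are both in non-first position.'' In a perm algebra it is the \emph{last} position that is distinguished: associativity together with $(xy)z=(yx)z$ lets you permute the first $n-1$ factors only, so $d^2(a)bc$ and $d^2(a)cb$ are different basis monomials, and $\langle x_1,x_2,x_3\rangle=\langle x_1,x_3,x_2\rangle$ is \emph{not} an identity of $\Der\Perm_{\succ}$ --- that is precisely why \eqref{id3} carries the outer factor $\succ x_4$. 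The identity itself is still true: $\langle a,b,c\rangle\succ x_4 = d(d^2(a)bc)\,x_4$, and after expanding by the Leibniz rule every summand ends in $x_4$, so $b$ and $c$ may then be transposed. (Your computation for \eqref{id2} uses the correct convention, so the inconsistency is contained.)

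The substantive gap is that the completeness direction remains a plan. The entire content of the paper's proof is your step (i): first \eqref{id1} and \eqref{id3} reduce everything to words $u_1\succ(u_2\succ(\cdots\succ(u_k\succ x)))$ with each $u_j$ in a Novikov basis (Lemma \ref{lem:Lemma2}); then a degree-five consequence of \eqref{id2} is applied repeatedly, through a three-case induction, to collapse the number of left factors $u_j$ of degree greater than one; finally a second induction, together with an auxiliary degree-specific identity verified by computer algebra, pushes the single remaining factor into $[P(X)]$. None of this is carried out in your proposal; you correctly identify it as ``the main obstacle,'' but resolving it \emph{is} the theorem, and your fallback of a diamond-lemma critical-pair analysis would likewise have to be executed (the overlaps of these three rules on nested words are exactly where the work lies). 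Two smaller remarks: the counting in your step (ii)--(iii) is unnecessary once the spanning set is literally $[P(X)]$, since Theorem \ref{newbasenov} already gives linear independence of its image; and your explicit verification that the identities hold in $\Der\Perm_{\succ}$ (modulo the fix above) is a welcome addition, as the paper leaves that direction implicit.
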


%\begin{theorem}
%Every identity up to degree $6$ follows from $\ref{id1}$, $\ref{id2}$ and $\ref{id3}$.
%\end{theorem}

\section{The proof of Theorem \ref{criterion} and Theorem \ref{directsum}}

\begin{proof}
It is clear that every monomial of $\Der\Perm_{\succ}\<X\>$ can be written as a sum of monomials of the form (\ref{derpermmonom}), where $\wt(a)=-1$ and $k_n=0$.

It remains to show that the monomial of the form (\ref{derpermmonom}) with conditions $\wt(a)=-1$ and $k_n=0$ can be written in terms of $\succ$. Let us prove it by induction on the length of monomial (\ref{derpermmonom}). The base of induction is $n=2$ and we have $x_1'x_2=x_1\succ x_2$.

For the monomial of the length $n$, we consider two cases depending on the appearance of the derivation degree $1$ in ${(k_1)},\cdots ,{(k_{n-1})}.$ If there is $r$ such that $k_{r}=1,$ then we have 
$$
x_{i_1}^{(k_1)}\cdots x_{i_{r-1}}^{(k_{r-1})} x_{i_{r}}' x_{i_{r+1}}^{(k_{r+1})}\cdots x_{i_{n-1}}^{(k_{n-1})}x_{i_n}. $$
By using left-commutative identity and definition of product $\succ$ we have 
$$
x_{i_1}^{(k_1)}\cdots x_{i_{r-1}}^{(k_{r-1})} x_{i_{r}}' x_{i_{r+1}}^{(k_{r+1})}\cdots x_{i_{n-1}}^{(k_{n-1})}x_{i_n}= 
x_{i_1}^{(k_1)}\cdots x_{i_{r-1}}^{(k_{r-1})}  x_{i_{r+1}}^{(k_{r+1})}\cdots x_{i_{n-1}}^{(k_{n-1})}(x_{i_{r}}\succ x_{i_n}).$$

Since, $(x_{i_{r}}\succ x_{i_n})\in \Der\Perm_{\succ}\<X\> $ the remaining part can be rewritten by inductive hypothesis.

Now, consider the second case such that there is no generator with derivation degree 1. That is,
$$x_{i_1}^{(k_1)}\cdots x_{i_{n-1}}^{(k_{n-1})}x_{i_n},$$
where $k_1,\ldots, k_{n-1}\neq 1$  and $k_1 +\ldots + k_{n-1}=n-1.$ Then we may assume that  
$$x_{i_1}^{(k_1)}\cdots x_{i_{n-1}}^{(k_{n-1})}x_{i_n}=x_{i_1}^{(k_1)}\cdots x_{i_{r-1}}^{(k_{r-1})}x_{i_r}^{(k_r)} x_{i_{r+1}}\cdots x_{i_{n-1}} x_{i_n},$$ where $k_1,\ldots, k_{r-1}\neq 1$ and $k_1 +\ldots + k_{r-1}=n-1.$
We choose any generator $x_{i_j}$ with minimal derivation degree greater than $1.$ Without  loss of generality, we assume that the generator $x_{i_r}$ has minimal derivation degree, that is $k_r=min(\{k_1,k_2,\ldots,k_r\}).$ Then we  rewrite it as follows:
\begin{multline*}
x_{i_1}^{(k_1)}\cdots x_{i_{r-1}}^{(k_{r-1})}x_{i_r}^{(k_r)} x_{i_{r+1}}\cdots x_{i_{n-1}} x_{i_n}= \\
x_{i_1}^{(k_1)}\cdots x_{i_{r-1}}^{(k_{r-1})}((\cdots(x_{i_r}\succ x_{i_{r+1}})\succ\cdots)\succ x_{j_{r+k_r}}) x_{j_{r+k_r+1}}\cdots x_{i_n} \\
-\sum_{t,t_1,\ldots, t_{k_r}<k_r} x_{i_1}^{(k_1)}\cdots x_{i_{n-1}}^{(k_{n-1})} x_{i_r}^{(t)} x_{j_{r+1}}^{(t_1)}\cdots x_{j_{r+k_r}}^{(t_{k_r})}x_{j_{r+k_r+1}}\cdots x_{i_n}.
\end{multline*}
The monomial
$$x_{i_1}^{(k_1)}\cdots x_{i_{r-1}}^{(k_{r-1})}((\cdots(x_{i_r}\succ x_{i_{r+1}})\succ\cdots)\succ x_{j_{r+k_r}}) x_{j_{r+k_r+1}}\cdots x_{i_n}$$
can be written in terms of $\succ$ by inductive hypothesis. For the monomials of the form
$$x_{i_1}^{(k_1)}\cdots x_{i_{n-1}}^{(k_{n-1})} x_{i_r}^{(t)} x_{j_{r+1}}^{(t_1)}\cdots x_{j_{r+k_r}}^{(t_{k_r})}x_{j_{r+k_r+1}}\cdots x_{i_n},$$ where $t,t_1,\ldots,t_{k_r}<k_r,$ we have 
if $t,t_1,\ldots,t_{k_r}>1,$ we use the same way which is given above several times and finally obtain the generator with derivation degree $1$ which correspond to the first case.
\end{proof}

Let us recall some facts about Novikov dialgebra(see \cite{KS2022}):
\begin{itemize}
  \item Every Novikov dialgebra can be embedded into perm algebra with derivation relative to the operations
$$x_1\vdash x_2 = x_1 d(x_2),\;\;\; x_1\dashv x_2 = d(x_2) x_1.$$
  \item If $a\in\Der\Perm\<X\>$ of the form (\ref{derpermmonom}) then $a$ can be written in terms of $\vdash$ and $\dashv$ if and only if $\wt(a)=-1$.
  \item If $a\in\Der\Perm\<X\>$ of the form (\ref{derpermmonom}) then $a$ can be written in terms of $\prec$ if and only if $\wt(a)=-1$ and $k_n\neq 0$.
\end{itemize}
Now, we are ready to prove the Theorem \ref{directsum}.
\begin{proof}
Firstly, we equalize the operations of Novikov dialgebra as follows:
$$x_1\succ x_2=x_2\dashv x_1,\;\;\; x_1\prec x_2=x_1\vdash x_2.$$
We denote by $P_{(-1)}$ the subspace of monomials $\Der\Perm\<X\>$ of the weight $-1$. Also, we denote by $P_{(-1,k_n= 0)}$ and $P_{(-1,k_n\neq 0)}$ the subspaces of monomials $P_{(-1)}$ of the form (\ref{derpermmonom}) with $k_n=0$ and $k_n\neq 0$, respectively.

By the facts that the free $\di$-$\Nov\<X\>$ isomorphic to $\Der\Perm\<X\>$ and a linear basis of $\di$-$\Nov\<X\>$ in $\Der\Perm\<X\>$ are monomials of the weight $-1$,  we can represent a linear basis of $\di$-$\Nov\<X\>$ as follows:
$$P_{(-1)}=P_{(-1,k_n= 0)}\oplus P_{(-1,k_n\neq 0)}.$$
By Theorem \ref{criterion} and the third fact, all monomials of subspaces $P_{(-1,k_n= 0)}$ and $P_{(-1,k_n\neq 0)}$ can be written in terms of $\succ$ and $\prec$ reprectively, i.e. in terms of $\dashv$ and $\vdash$.
\end{proof}

\section{Proof of Theorem \ref{Cohn}}

\begin{proof}
Let us prove it by Cohn's criterion on homomorphic images of special algebras \cite{Cohn}. Suppose that $\alpha$ is an ideal of $\Der\Perm_\succ\<X\>$ and $\{\alpha\}$ is an ideal of $\Der\Perm\<X\>$ generated by by the set $\alpha$. It is enough to prove that $\Der\Perm_\succ\<X\>\cap \{\alpha\}\subseteq\alpha$. 

Assume that $g_i$ $ (i\in I)$ are generators of the ideal $\alpha.$ Let $w$ be a non-zero element of $\Der\Perm_\succ\<X\>\cap \{\alpha\}$. Then 
$$w=\sum_k \mu_k w_k, \,\, (\mu_k\in \mathbb{K})$$ 
is a linear combination of  monomials $w_k$ in $X^{(w)}$ and $ g_i$ $ (i\in I)$  such that each monomial  is linear by at least one generator of $\alpha.$  

Let $w_k=a_1\cdots a_n$ be a term of $w$ in the linear combination. Assume that there is $r$ such that $a_r=g^{(k_r)}_i,$ where $g_{i}$ is a generator of the ideal $\alpha$.  Then express $a_1,\ldots,a_{r-1},a_{r+1},\ldots ,a_{n}$ in terms of $x^{(i_j)}_{i_j}\in X^{(w)}$, therefore we can assume that $a_1,\ldots,a_{r-1},a_{r+1},\ldots ,a_{n}\in X^{(w)}.$ 

Since $w\in\Der\Perm_\succ\<X\>$, by Theorem \ref{criterion} we have $\wt(w_k)=-1$ and  $a_n\in X$. That is 
$$w=\sum_k \mu_k w_k, \,\, (\mu_k\in \mathbb{K})$$
where
$$w_k=x_{i_1}^{(k_1)}\cdots g^{(k_r)}_i\cdots x_{i_{n-1}}^{(k_{n-1})}x_{i_n},$$
where $g_{i}$ is a generator of the ideal $\alpha$.  But by Theorem \ref{criterion} we can express each $w_k$ as linear combination of monomials in terms of product $\succ$ with generators $x_{i_1},\ldots, x_{i_n}$ and $g_i$ $ (i\in I),$ where $g_i$ appears in each monomial at least once.
Therefore, every $w_k\in \alpha,$ and we have $w\in \alpha.$ 

%To prove the statement, we  consider two cases, depending on which position $g_i$ appears in $a_1\cdots a_n.$

%Case 1. If
%$$w_k=x_{i_1}^{(k_1)}\cdots x_{i_{n-1}}^{(k_{n-1})} g_{r_k}$$
%then we rewrite $w_k$ in terms of $\succ$ with a generator $g_{r_k}$.

%Case 2. If
%$$w_k=x_{i_1}^{(k_1)}\cdots  %x_{i_{n-2}}^{(k_{n-2})}g^{(k_{n-1})}_{r_k}x_{i_n},$$
%then we firstly rewrite associative-commutative part of $w_k$ i.e. $x_{i_1}^{(k_1)}\cdots  x_{i_{n-2}}^{(k_{n-2})}g^{(k_{n-1})}_{r_k}$ in terms of $\succ$ with a generator $g_{r_k}$ and rewrite $w_k$ with $x_{i_n}$.

\end{proof}

\section{A linear basis of the free $\Der\Perm_{\succ}$ algebra}

A monomial basis of the free $\Der\Perm_{\prec}$ algebra was constructed in \cite{KS2022}. By Theorem \ref{directsum}, to construct a free basis of Novikov dialgebra it is enough to solve the same problem for $\Der\Perm_{\succ}$ algebra. In this section, we construct a monomial basis of the free $\Der\Perm_{\succ}$ algebra.
Let us define an order on generators of $\Der\Perm\<X\>$ as follows:
\begin{center}
$x_i^{(r_m)}>x_j^{(r_n)}$
\end{center}
if $r_m>r_n$, or $r_m=r_n$ and $i>j$.

Let us define a normal form for monomials of the form (\ref{derpermmonom}) of weight $-1$ and $k_n=0$ as follows:
\begin{equation}\label{normalmonomialderperm}
x_{k_1}'\ldots x_{k_m}'x_{j_1}^{(r_1)}\cdots x_{j_n}^{(r_n)}x_{i_l}\cdots x_{i_{2}} x_{i_{1}},
\end{equation}
where
$$r_1,\ldots, r_n\neq 1,\;\;\; x_{i_l}\geq \ldots \geq x_{i_{2}},$$
$$x_{j_n}^{(r_n)}\geq\ldots\geq x_{j_1}^{(r_1)},\;\;\;  k_m\geq\ldots\geq k_1.$$

Denote by $P(X)$ the set of all monomials of normal forms~\eqref{derpermmonom} and denote by $\Magma\langle X\rangle$ the free magma algebra with binary operation~$\succ$ generated by~$X$.

We define a mapping $\phi$ inductively on monomials of the form $(\ref{normalmonomialderperm})$ as follows:
\[
\phi:\Der\Perm\<X\>\rightarrow\Magma\<X\>,
\]
\[
\phi(x_{i_n}^{(n-1)}x_{i_{n-1}}\cdots x_{i_2}x_{i_1})=((\cdots(x_{i_n}\succ x_{i_{n-1}})\succ\cdots)\succ x_{i_2})\succ x_{i_1},
\]
where $x_{i_{n-1}}\geq\ldots\geq x_{i_2}$.
\begin{multline*}
    \phi(x_{k_1}'\ldots x_{k_m}'x_{j_1}^{(r_1)}\cdots x_{j_n}^{(r_n)}x_{i_l}\ldots x_{i_{2}} x_{i_{1}})=\\
\phi(x_{k_1}'\ldots x_{k_m}'x_{j_1}^{(r_1)}\cdots x_{j_{n-1}}^{(r_{n-1})}P_1x_{i_{l-r_n}}\ldots x_{i_{2}} x_{i_{1}}),
\end{multline*}
where $P_1=\phi(x_{j_n}^{(r_n)}x_{i_l}\ldots x_{i_{l+1-r_n}})$.
We may assume that $P_1$ is a new generator i.e. $X_1=X\cup \{P_1\}$ such that $\wt(P_1)=-1$ and $P_1\geq x_{i_{l-r_n}}\geq \ldots\geq x_1$. Calculating by the given rule, finally, we get
\begin{multline}\label{generalform}
\phi(x_{k_1}'\cdots x_{k_m}'x_{j_1}^{(r_1)}\cdots x_{j_n}^{(r_n)}x_{i_l}\cdots x_{i_{2}} x_{i_{1}})=\cdots\\
=x_{k_1}\succ(\cdots(x_{k_m}\succ((\cdots(x_{j_1}\succ P_{n-1})\cdots\succ x_{i_2})\succ x_{i_1}))\cdots).
\end{multline}

We denote by $[a]$ the image~$\phi(a)$.
Define a map
$\tau\colon \Magma\langle X\rangle\to \Der\Perm\langle X\rangle$ by the formula
$\tau(x) = x$, $x\in X$, and
$\tau(a\succ b) = a'b$.
For example, if $x,y,z\in X$, then
$\tau((x\succ y)\succ z) = (x'y)'z = x''yz + x'y'z$.

Let us define a sequence on a monomial of the set $P(X)$ of the form (\ref{normalmonomialderperm}) as follows:
$$S(a)=(\underset{m}{\underbrace{1,\ldots ,1}},r_1,\ldots,r_{n-1},r_n),$$
i.e. that is a sequence of positive numbers in a non-decreasing order.
Define an order on that sequences lexicographically.
%i.e. 
%for $S(a)=(\underset{m}{\underbrace{1,\ldots ,1}},r_1,\ldots,r_{n-1},r_n)$ and $S(b)=(\underset{k}{\underbrace{1,\ldots ,1}},t_1,\ldots,t_{l-1},t_l)$
%\begin{center}
%$S(a)>S(b)$ if and only if $r_i=t_i$ for $i=1,\ldots,k$ and $r_{k+1}>t_{k+1}$.
%\end{center}
Define an order on monomials of the set $P(X)$ as follows:
\[
a>b\quad \textrm{if}\quad S(a)>S(b)
\]
and call that order by $der$-$lex$.

\begin{lemma}\label{partitionderivation}
Let $a = x_{k_1}'\cdots x_{k_m}'x_{j_1}^{(r_1)}\cdots x_{j_n}^{(r_n)}x_{i_l}\cdots x_{i_{2}} x_{i_{1}}$ be from $P(X)$.
Then $\tau([a]) = a + \sum_j b_j$, where $a>b_j$ for all $j$.
\end{lemma}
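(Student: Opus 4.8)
The plan is to analyze the map $\tau\circ\phi$ term by term, using the recursive definition of $\phi$ and the explicit action of $\tau$ on a product $a\succ b = a'b$. The key observation is that $\tau(a\succ b) = a'b$ produces, via the Leibniz rule for the derivation inside $\Der\Perm\langle X\rangle$, a sum of one ``leading'' term in which the full derivation weight of $a$ stays concentrated on the first generator, plus correction terms in which part of that weight has been redistributed onto later generators. Concretely, if $a$ is a monomial whose $\phi$-preimage rebuilds a block $x_{j}^{(r)}x_{i_l}\cdots x_{i_{l+1-r}}$, then $\tau$ applied to $\phi(x_j^{(r)}x_{i_l}\cdots) = ((\cdots(x_j\succ x_{i_l})\succ\cdots)\succ x_{i_{l+1-r}})$ yields $x_j^{(r)}x_{i_l}\cdots x_{i_{l+1-r}}$ as the leading term, together with terms $x_j^{(t)}x_{i_l}^{(t_1)}\cdots x_{i_{l+1-r}}^{(t_r)}$ where $t + t_1 + \cdots + t_r = r$ but $t < r$. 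This is exactly the inverse of the computation that appears in the proof of Theorem~\ref{criterion}.

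First I would set up an induction on the length $n$ of the monomial $a$ (equivalently on $m+n+l$), with the base case $n=1,m=0$ being the bottom line of the definition of $\phi$: here $\tau([x_{i_n}^{(n-1)}x_{i_{n-1}}\cdots x_{i_1}]) = x_{i_n}^{(n-1)}x_{i_{n-1}}\cdots x_{i_1} + (\text{lower terms})$, where by the Leibniz rule the corrections move derivation degree off $x_{i_n}$ onto some of the $x_{i_j}$, strictly decreasing the partition $S$ in der-lex order since the largest part $n-1$ is replaced by smaller parts. For the inductive step I would peel off the innermost application of $\phi$, namely the passage to the new generator $P_1 = \phi(x_{j_n}^{(r_n)}x_{i_l}\cdots x_{i_{l+1-r_n}})$. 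By induction on the ``$P$-collapsed'' monomial $x_{k_1}'\cdots x_{k_m}'x_{j_1}^{(r_1)}\cdots x_{j_{n-1}}^{(r_{n-1})}P_1 x_{i_{l-r_n}}\cdots x_{i_1}$ (which has strictly smaller length), $\tau$ of its $\phi$-image equals that monomial plus der-lex-smaller terms; then I substitute back $\tau$ applied to the definition of $P_1$, which contributes the leading reconstruction $x_{j_n}^{(r_n)}x_{i_l}\cdots$ plus weight-redistributed corrections. One must check that (i) the leading substitution reassembles exactly $a$, using the normal-form conditions ($r_i \neq 1$, the prescribed orderings) so that no rewriting via the left-commutative identity is triggered, and (ii) every correction term is der-lex-smaller than $a$: this holds because each correction splits a part of the partition $S(a)$ into strictly smaller parts, or lowers a part while creating no larger part, and such operations strictly decrease the lexicographically-ordered non-decreasing sequence.

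The main obstacle I anticipate is bookkeeping the interaction between the two sources of correction terms: the ones coming from the inductive hypothesis applied to the $P_1$-collapsed monomial, and the ones coming from expanding $P_1$ itself — and, more subtly, verifying that after substituting a corrected (lower) value of $P_1$ back into a corrected (lower) outer monomial, no cancellation or ordering violation spoils the claim that the \emph{unique} term equal to $a$ has coefficient $1$. The point is that the coefficient-$1$ leading term is produced in exactly one way (take the leading term at every recursive stage), because at each stage ``leading'' means keeping all derivation weight maximally to the left, and the normal form guarantees this configuration is the monomial $a$ we started from; every other choice at some stage strictly lowers $S$, and subsequent stages can only lower it further, never raise it back to $S(a)$. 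Once this monotonicity of $S$ under weight redistribution is established as a clean combinatorial lemma, the theorem follows by assembling the two expansions and collecting terms.
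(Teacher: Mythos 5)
Your argument is correct and follows essentially the same route as the paper: expand $\tau([a])$ via the Leibniz rule along the recursive definition of $\phi$, identify the unique leading term (all derivation weight kept on the outermost generator at every stage) as $a$ itself, and check that every non-leading choice strictly lowers the sequence $S$ in der-lex order, the paper merely organizing the recursion from $x_{j_1}$ inward rather than from $P_1$ outward. The one imprecision is your phrase ``lowers a part while creating no larger part'': a correction can enlarge an inner part (weight migrating from $x_{j_s}^{(r_s)}$ onto a deeper $x_{j_t}^{(r_t)}$ with $r_t\ge r_s$), but since the donor's degree strictly drops and weight only moves inward onto generators of equal-or-larger degree or of degree zero, the multiset of positive degrees gains an element below $r_s$ without losing any, so the sorted sequence still decreases lexicographically, exactly as your monotonicity remark requires.
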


\begin{proof}
By definition of the mapping $\tau$, it is enough to prove it for
$$a=x_{j_1}^{(r_1)}\cdots x_{j_n}^{(r_n)}x_{i_l}\ldots x_{i_{2}} x_{i_{1}}.$$
By Leibniz rule for derivation, we have
\begin{multline*}
\tau([x_{j_1}^{(r_1)}\cdots x_{j_n}^{(r_n)}x_{i_l}\cdots x_{i_{2}} x_{i_{1}}])=\tau((\cdots(x_{j_1}\succ P_{n-1})\cdots\succ x_{i_2})\succ x_{i_1})=\\
x_{j_1}^{(r_1)} \tau(P_{n-1}) x_{i_l}\cdots x_{i_2} x_{i_1}+\sum_{p<r_1} x_{j_1}^{(p)} \tau(P_{n-1}) \cdots x_{i_1}.
\end{multline*}
By der-lex order, $a$ is a bigger that all monomials of $\sum_{p<r_1} x_{j_1}^{(p)} \tau(P_{n-1}) \cdots x_{i_1}$.
In the same way, applying Leibniz rule to $\tau(P_{n-1})$ we obtain the same result as above. Finally, we obtain only one monomial which is not less than $a$. That is $a$ itself.
\end{proof}

Under the map $\phi$ for each monomial of $P(X)$ correspond a unique monomial of magma algebra. Let us denote by $[P(X)]$ the set of all such monomials obtained under the mapping $\phi$. Also, we define a linear mapping $\tau$ as above for the following algebras:
$$\tau:\Der\Perm_{\succ}\rightarrow \Der\Perm.$$

\begin{theorem}\label{newbasenov}
The set $[P(X)]$ is a linear basis of $\Der\Perm_\succ\<X\>$.
\end{theorem}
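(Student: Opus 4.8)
The plan is to show that $[P(X)]$ is both a spanning set and a linearly independent set for $\Der\Perm_\succ\langle X\rangle$, using the map $\tau$ as the central tool. For spanning: by Theorem~\ref{criterion}, a linear basis of $\Der\Perm_\succ\langle X\rangle$ (viewed inside $\Der\Perm\langle X\rangle$) consists of all monomials of the form~\eqref{derpermmonom} with $\wt(a)=-1$ and $k_n=0$. Any such monomial can be put into the normal form~\eqref{normalmonomialderperm} by applying the left-commutative identity to reorder factors and by collecting the derivation-degree-one generators to the front; hence $P(X)$ already indexes a spanning set of $\Der\Perm_\succ\langle X\rangle$. It therefore suffices to show that the magma monomials $[P(X)] = \{\phi(a) : a\in P(X)\}$, regarded as elements of $\Der\Perm_\succ\langle X\rangle$ via their defining expressions in $\succ$, also span and are independent.

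The key mechanism is Lemma~\ref{partitionderivation}: for $a\in P(X)$ we have $\tau([a]) = a + \sum_j b_j$ with $b_j < a$ in the $der$-$lex$ order, and moreover (as the proof of the lemma makes clear) each $b_j$ is again of the form~\eqref{derpermmonom} with weight $-1$ and last derivation degree $0$, so after normalization each $b_j\in P(X)$ with $b_j<a$. Thus the matrix expressing $\{\tau([a]) : a\in P(X)\}$ in the basis $P(X)$ is unitriangular with respect to $der$-$lex$. Since $der$-$lex$ is a well-founded (indeed, on each fixed arity, finite) order, this matrix is invertible. I would carry this out in two steps: first observe that $\tau$ is, by construction, the natural embedding realizing $a\succ b \mapsto a'b$, i.e.\ $\tau$ is exactly the identification of $\Der\Perm_\succ\langle X\rangle$ as a subalgebra of $\Der\Perm\langle X\rangle$; second, conclude from unitriangularity that $\{[a] : a\in P(X)\}$ is linearly independent in $\Der\Perm_\succ\langle X\rangle$ (a nontrivial relation among the $[a]$ would push forward under $\tau$ to a nontrivial relation among distinct basis monomials of $\Der\Perm\langle X\rangle$, impossible) and that it spans (since its image under $\tau$ spans $P_{(-1,k_n=0)}$, which by Theorem~\ref{criterion} is all of $\Der\Perm_\succ\langle X\rangle$, and $\tau$ is injective on the span).

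I expect the main obstacle to be making precise the claim that $\tau$ is injective on $\Der\Perm_\succ\langle X\rangle$ — equivalently, that no nontrivial linear combination of magma monomials in $\succ$ maps to zero in $\Der\Perm\langle X\rangle$. This is really the content of "$\Der\Perm_\succ$ is a subalgebra defined by the operation $x_1\succ x_2 = d(x_1)x_2$": the map $\tau$ is the identity on that subalgebra, so injectivity is automatic once we know $[P(X)]$ is a \emph{generating} set realized faithfully. The cleanest route is to note that $\tau$ is well-defined as a linear map on the span of all magma monomials that land in weight $-1$ (it is just substitution of the definition of $\succ$), that its image is contained in $P_{(-1,k_n=0)}$, and that Lemma~\ref{partitionderivation} shows the image contains a triangular family indexed by all of $P(X)$; hence $\tau$ restricted to $\mathrm{span}[P(X)]$ is a bijection onto $P_{(-1,k_n=0)} = \Der\Perm_\succ\langle X\rangle$, forcing $[P(X)]$ to be a basis. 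A secondary technical point is verifying that the $b_j$ appearing in Lemma~\ref{partitionderivation}, after reduction to normal form via the left-commutative identity, genuinely remain strictly below $a$ in $der$-$lex$ — but this follows because reordering factors by left-commutativity does not change the multiset of derivation degrees, hence does not change $S(\cdot)$, while the Leibniz-rule terms strictly decrease some derivation degree and thus strictly decrease $S$.
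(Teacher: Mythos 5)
Your proposal is correct and follows essentially the same route as the paper: both arguments rest on Lemma~\ref{partitionderivation} (the der-lex triangularity $\tau([a])=a+\sum_j b_j$ with each $b_j$ strictly smaller) together with the observation that $\tau$ is the identity embedding of $\Der\Perm_{\succ}\<X\>$ into $\Der\Perm\<X\>$, so that inverting the triangular system shows $[P(X)]$ spans. The only cosmetic difference is the last step: you read off linear independence directly from the unitriangularity of the change-of-basis matrix, whereas the paper proves spanning by an explicit two-case induction descending the der-lex order and then concludes independence from the count $|P_n(X)|=|[P_n(X)]|=\dim F_n$.
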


\begin{proof}
It is enough to show that for every
weight-homogeneous differen\-tial polynomial
$a\in \Der\Perm\<X\>$ of the form (\ref{normalmonomialderperm}) there exists $[p_1],\ldots,[p_r]\in [P(X)]$ such that
\begin{equation}\label{equality}
a=\sum_{i=1}^{r} \lambda_i \tau([p_i]), \quad \lambda_i\in \Bbbk .
\end{equation}
Indeed, assume \eqref{equality} holds and
$w\in \Der\Perm_{\succ}\langle X\rangle $
is a homogeneous element of degree $n$.
Then $a=\tau(w)$ is a weight-homogeneous differential polynomial
which can be presented by \eqref{equality}. Since $\tau $ is injective, we obtain
\[
w=\sum_{i=1}^{r} \lambda_i [v_{i}], \quad [v_i]\in [P_n(X)].
\]
Hence, the set $[P_n(X)]$ is linearly complete in the space $F_n$
of degree $n$ elements in $ \Der\Perm_{\succ}\langle X\rangle$.
Thus this is a basis since $|P_n(X)|=|[P_n(X)]|=\dim F_n$.

The statement \eqref{equality} is enough to prove for monomials only. Let us prove it by induction on the degree of $a$. The base of induction is $n=2$. Suppose that the statement holds for the degree up to $n-1$. Let us prove that statement for monomial of degree $n$.

Case 1: if
$$x_{k_1}'\ldots x_{k_m}'x_{j_1}^{(r_1)}\cdots x_{j_n}^{(r_n)}x_{i_l}\cdots x_{i_{2}} x_{i_{1}},
,$$
such that $m\neq 0$, then by inductive hypothesis for the subword of $a$, we have
$$x_{j_1}^{(r_1)}\cdots x_{j_n}^{(r_n)}x_{i_l}\cdots x_{i_{2}} x_{i_{1}}=\sum_i \lambda_i\tau([p_i])$$
and
$$a=\sum_i \lambda_i\tau(x_{k_1}\succ(\ldots\succ(x_{k_m}\succ([p_i]))\ldots)),$$
where $x_{k_1}\succ(\ldots\succ(x_{k_m}\succ([p_i]))\ldots)\in [P(X)]$.

Case 2: It is enough to consider only monomials of the form
\begin{equation}\label{good}
a=x_{j_1}^{(r_1)}\cdots x_{j_n}^{(r_n)}x_{i_l}\cdots x_{i_{2}} x_{i_{1}}, \end{equation}
such that $r_t\neq 1$ for all $t$.

For $a$, we have the following equality:
\begin{equation}\label{representation}
a=\tau([a])-\sum_i b_i
\end{equation}
and by Lemma \ref{partitionderivation}, $a>b_i$ for all $i$. Applying the equality (\ref{representation}) to $b_i$ several times, finally, by der-lex order we obtain a generator with derivation degree $1$ which correspond to the first case.
\end{proof}

\section{The defining identities of variety $\Der\Perm_{\succ}$}

\begin{definition}
An algebra with identities $(\ref{id1})$, $(\ref{id2})$ and $(\ref{id3})$ is called $\SLC$-algebra(special left-commutative).
\end{definition}

Let the set $N(X)$ be a linear basis of the free Novikov algebra. Every non-associative word $u$ in $X$ can be written as 
\begin{equation}\label{LNform}
u = L(u_1,\dots, u_k, x):=u_{1}\succ (u_{2} \succ ( \dots \succ (u_{k}\succ x)\dots )),
\end{equation}
where $u_{j}$ are non-associative words, $x\in X$.
Let $k$ be the {\em length\/} of $u$.
For a linear combination of \eqref{LNform}, the length 
is the maximal length of its summands.

\begin{lemma}\label{lem:Lemma2}
Every element of $\SLC\langle X\rangle $
may be written as a linear combination 
of words of the form \eqref{LNform} with $u_j\in N(X)$, $x\in X$.
\end{lemma}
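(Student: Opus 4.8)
The plan is to show that the rewriting system implicit in identities \eqref{id1}, \eqref{id2}, \eqref{id3} reduces every element of $\SLC\langle X\rangle$ to the stated form. An arbitrary element is a linear combination of non-associative monomials in $X$; each such monomial can be uniformly put into the shape \eqref{LNform}, $u = L(u_1,\dots,u_k,x)$ with $x\in X$, simply by repeatedly applying associativity of $\succ$ from the left---that is, by rewriting every occurrence of $a\succ(b\succ c)$ or $(a\succ b)\succ c$ so that the rightmost entry becomes a generator. (Here one should be careful: associativity does \emph{not} hold, but one can always expand $(a\succ b)\succ c = a\succ(b\succ c) + \langle a,b,c\rangle$, and the associator term is again a sum of monomials that can be treated recursively; the point is only that the outermost structure \eqref{LNform} is reachable because each monomial has a well-defined rightmost generator $x$.) So after this first, purely structural step we may assume $u = L(u_1,\dots,u_k,x)$ with $x\in X$ but the inner words $u_1,\dots,u_k$ still arbitrary.

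Next I would induct on the total degree of $u$, and for fixed degree on the multiset of degrees of the $u_j$ (or on the length $k$), to replace each $u_j$ by an element of the free Novikov algebra basis $N(X)$. The key observation is that the identities \eqref{id1} and \eqref{id3} are exactly the defining identities of Novikov algebras \emph{restricted to the position of the $u_j$'s}: identity \eqref{id1} is left-commutativity $x_1\succ(x_2\succ x_3)=x_2\succ(x_1\succ x_3)$, and identity \eqref{id3} says that the associator $\langle x_1,x_2,x_3\rangle$ is symmetric in its first two arguments, which together are precisely the axioms of a (left) Novikov algebra. Hence in any $\SLC$-algebra the subalgebra--or rather the set of expressions that will occupy an "inner slot"--behaves as a Novikov algebra, so each $u_j$ can be rewritten modulo these relations as a linear combination of basis words from $N(X)$. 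The subtlety is that $u_j$ does not literally sit inside a Novikov subalgebra; I must check that the ambient context $L(\dots,u_j,\dots,x)$ is compatible with performing Novikov rewriting \emph{inside} the $j$th slot. This is where identity \eqref{id2} enters: $\langle x_1, x_2\succ x_3, x_4\rangle = \langle x_1, x_3, x_2\succ x_4\rangle$ lets one move a factor past an associator, which is what is needed to propagate a rewriting done deep inside one of the $u_j$ out to the level of the whole word. Concretely, whenever rewriting $u_j$ produces an associator $\langle\cdot,\cdot,\cdot\rangle$ that must be "carried" through the surrounding $\succ$'s, identities \eqref{id2} and \eqref{id3} allow that associator to be absorbed, lowering the complexity in the induction.

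The main obstacle I expect is the bookkeeping in the induction: after rewriting an inner $u_j$ as a Novikov combination and pushing associators outward, one generates new terms of the form \eqref{LNform} whose inner words are of smaller degree but possibly larger length $k$, so the induction must be set up on a well-founded order (e.g. lexicographic on (degree, then $k$, then some measure of how far the inner words are from being in $N(X)$)) that simultaneously controls all three moves. One must verify that every application of \eqref{id1}, \eqref{id2}, \eqref{id3} strictly decreases this measure, and that the base cases---words of degree $1$ or $2$---are trivially already of the required form. A secondary point to check is that the reduction is well-defined, i.e. that it does not matter in which order the inner slots are processed; but since we only claim existence of \emph{a} linear combination of the desired shape, not uniqueness, confluence is not actually needed here---any terminating strategy suffices, and termination is guaranteed by the chosen order.
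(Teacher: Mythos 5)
Your overall strategy --- put the monomial into the right-normed shape \eqref{LNform} and then reduce each inner word $u_j$ to Novikov normal form --- is the same normal-form reduction the paper intends (its own proof is a one-line appeal to \eqref{id1} and \eqref{id3}), but two points in your write-up need correcting and one central step is missing. First, the structural step requires no identities at all: every magma monomial is already \emph{uniquely} of the form $L(u_1,\dots,u_k,x)$ with $x\in X$, obtained by following the right spine of the tree to its rightmost leaf, so the detour through $(a\succ b)\succ c=a\succ(b\succ c)+\langle a,b,c\rangle$ is unnecessary; in particular the length $k$ and the degrees of the slots never change during the reduction, so no elaborate well-founded order is needed. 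Second, \eqref{id3} does not say the associator is symmetric in its \emph{first two} arguments: it swaps $x_2$ and $x_3$, the \emph{last two} (which is the correct Novikov convention for $a\succ b=d(a)b$, namely \eqref{id1} together with $\langle a,b,c\rangle=\langle a,c,b\rangle$), and --- crucially --- it asserts this symmetry only after a further right multiplication by $x_4$. Hence an $\SLC$-algebra is \emph{not} a Novikov algebra, and your sentence ``the inner slots behave as a Novikov algebra'' is exactly the assertion the lemma requires, not an observation.

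That assertion is the genuine gap: you flag that a Novikov rewriting performed at a nested position inside some $u_j$ must be propagated out to the ambient word and say \eqref{id2} does it, but you never verify this, and it is the entire mathematical content of the lemma. The verification is short. By \eqref{id1} one may assume the slot being rewritten is the outermost left factor, i.e.\ one must show $C[f]\succ R=0$ for $f=\langle a,b,c\rangle-\langle a,c,b\rangle$ and any context $C$ inside $u_1$. If the innermost layer of $C$ is $\square\succ d$, then $f\succ d=0$ is an instance of \eqref{id3} and the whole term dies. If it is $d\succ\square$, then \eqref{id1} gives $d\succ\langle a,b,c\rangle=\langle a,b,d\succ c\rangle$, and \eqref{id2} turns this into $\langle a,d\succ b,c\rangle$, so $d\succ f=f(a,d\succ b,c)$ is again an instance of $f$ with the nesting depth reduced by one; induction on the depth reduces everything to a top-level instance, which \eqref{id3} annihilates once the word is multiplied on the right by $R$. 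With this inserted your argument is complete. Note that this route genuinely uses \eqref{id2}, whereas the paper claims only \eqref{id1} and \eqref{id3} are needed; your instinct that \eqref{id2} is required for the propagation appears to be the more accurate accounting.
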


\begin{proof}
It follows from identities (\ref{id1}) and (\ref{id3}).
\end{proof}

Let us recall a free basis of Novikov algebra \cite{Gub-Sar}: that is the monomials of the from (\ref{generalform}) with additional condition $i_2\geq i_1$. By (\ref{generalform}), $u_i$ has a form
$$x_{k_1}\succ(\cdots(x_{k_m}\succ((\cdots(x_{j_1}\succ P_{n-1})\cdots\succ x_{i_2})\succ x_{i_1}))\cdots).$$

Now, we are ready to prove Theorem \ref{specialidentities}.

\begin{proof}
To prove that the defining identities of the variety $\Der\Perm_{\succ}$ are $(\ref{id1})$, $(\ref{id2})$ and $(\ref{id3})$ we have to show that any monomial of the free $\SLC$-algebra can be written as a sum of monomials from the set $[P(X)]$. In the other words, it means that $\SLC\<X\>\cong\Der\Perm_{\succ}\<X\>$. Let us prove it by induction on the degree of monomial
$$u=u_{1}\succ (u_{2} \succ ( \cdots \succ (u_{k}\succ x)\cdots )),$$
where $u_i\in N(X)$.
The base of the induction is the case when the degree of $u$ is $4$. For monomial of degree $n$, we consider several cases:

Case 1: $u_i=x_j\in X$. For that case, by (\ref{id1}) we have
\begin{multline*}
    u=u_{1}\succ (u_{2} \succ ( \cdots \succ (u_{k}\succ x)\cdots ))=x_j\succ(u_{1}\succ ( \cdots \succ (u_{k}\succ x)\cdots ))
\end{multline*}
and the monomial
$$u_{1}\succ ( \cdots\succ(u_{i-1}\succ(u_{i+1}\succ\cdots \succ(u_{k}\succ x)\cdots))\cdots )$$
can be rewritten by inductive hypothesis.

Case 2: $u_i\neq x_j$. By (\ref{id1}), without lose of generality we can assume that $u_1=x_{k_1}\succ(\cdots(x_{k_m}\succ((\cdots(x_{j_1}\succ P_{n-1})\cdots\succ x_{i_2})\succ x_{i_1}))\cdots)$ and we have
$$u=(x_{k_1}\succ(\cdots(x_{k_m}\succ((\cdots(x_{j_1}\succ P_{n-1})\cdots\succ x_{i_2})\succ x_{i_1}))\cdots))\succ R,$$
where $m\geq2$ and $R=u_{2} \succ ( \cdots \succ (u_{k}\succ x)\cdots )$. By (\ref{id2}),
\begin{multline*}
   u= x_{k_1}\succ((\cdots(x_{k_m}\succ((\cdots(x_{j_1}\succ P_{n-1})\cdots\succ x_{i_2})\succ x_{i_1}))\cdots)\succ R) \\
   +x_{k_2}\succ((x_{k_1}\succ(\cdots(x_{k_m}\succ((\cdots(x_{j_1}\succ P_{n-1})\cdots\succ x_{i_2})\succ x_{i_1}))\cdots))\succ R) \\
   x_{k_1}\succ(x_{k_2}\succ((\cdots(x_{k_m}\succ((\cdots(x_{j_1}\succ P_{n-1})\cdots\succ x_{i_2})\succ x_{i_1}))\cdots)\succ R))
\end{multline*}
which correspond to the first case.

%If $m=1$ then by (\ref{id2}), we have
%\begin{multline*}
%    u=(x_{k_1}\succ((\cdots(x_{j_1}\succ P_{n-1})\cdots\succ x_{i_2})\succ x_{i_1})))\succ R= \\
%    x_{k_1}\succ(((\cdots(x_{j_1}\succ P_{n-1})\cdots\succ x_{i_2})\succ x_{i_1})\succ R) \\
%    +(x_{k_1}\succ x_{i_1})\succ(((\cdots(x_{j_1}\succ P_{n-1})\cdots)\succ x_{i_2})\succ R) \\
%    -x_{k_1}\succ (x_{i_1}\succ(((\cdots(x_{j_1}\succ P_{n-1})\cdots)\succ x_{i_2})\succ R)).
%\end{multline*}
%The monomials $$x_{k_1}\succ(((\cdots(x_{j_1}\succ P_{n-1})\cdots\succ x_{i_2})\succ x_{i_1})\succ R)$$ and $$x_{k_1}\succ (x_{i_1}\succ(((\cdots(x_{j_1}\succ P_{n-1})\cdots)\succ x_{i_2})\succ R))$$ correspond to the first case.

%The monomial of the form $$(x_{k_1}\succ x_{i_1})\succ(((\cdots(x_{j_1}\succ P_{n-1})\cdots)\succ x_{i_2})\succ R)$$ can be represented as $u=u_{1}\succ (u_{2} \succ ( \cdots \succ (u_{k}\succ(u_{k+1}\succ x))\cdots ))$
%which we will consider in the third case.

Case 3. $u_i\neq x_j$, and
\begin{equation}\label{goodnov1}
u_i=(\cdots((x_{j_1}\succ P_{n-1})\succ x_{i_p})\cdots\succ x_{i_2})\succ x_{i_1}
\end{equation}
or
\begin{equation}\label{goodnov2}
u_i=x_{k_1}\succ((\cdots((x_{j_1}\succ P_{n-1})\succ x_{i_p})\cdots\succ x_{i_2})\succ x_{i_1}).
\end{equation}

The identity (\ref{id2}) can be represented as follows:
\begin{multline}\label{id2deg5}
    (x_1\succ x_2)\succ((x_3\succ x_4)\succ x_5)=((x_1\succ x_2)\succ(x_3\succ x_4))\succ x_5\\
    -((x_1\succ x_2)\succ x_4)\succ(x_3\succ x_5)+x_4\succ ((x_1\succ x_2)\succ(x_3\succ x_5)).
\end{multline}
For
$$u = L(u_1,\dots, u_k, x)=u_1\succ (u_2\succ(R)),$$ suppose that 
$u_1=(\cdots(x_{p_1}\succ P_{n-1})\cdots\succ x_{i_2})\succ x_{i_1}$,
$u_2=(\cdots(x_{t_1}\succ P_{m-1})\cdots\succ x_{j_2})\succ x_{j_1}$ and without lose of generality we assume that the degree of $u_1$ bigger or equal than degree of $u_2$.
We use (\ref{id2deg5}) on $u$ and obtain
\begin{multline*}
    u_1\succ(u_2\succ R)= (u_1\succ u_2)\succ R  \\
    - (u_1\succ x_{j_1})\succ(\overline{u_{2}} \succ R) + x_{j_1}\succ (u_1\succ(\overline{u_{2}} \succ R)),
\end{multline*}
where $\overline{u_{2}}=(\cdots(x_{t_1}\succ P_{m-1})\cdots)\succ x_{j_2}$.
The monomial $x_{j_1}\succ (u_1\succ(\overline{u_{2}} \succ R))$ correspond to the first case. For the monomial $(u_1\succ x_{j_1})\succ(\overline{u_{2}} \succ R)$ we use (\ref{id2deg5}) in the same way several times until $\overline{u_2}$ comes to the form $x_{t_1}\succ P_{m-1}$. Finally, we obtain
$$\overline{u_1}\succ((x_{t_1}\succ P_{m-1})\succ R)$$
and by (\ref{id2deg5}) we have
\begin{multline*}
    \overline{u_1}\succ((x_{t_1}\succ P_{m-1})\succ R)=(\overline{u_1}\succ(x_{t_1}\succ P_{m-1}))\succ R \\
    -(\overline{u_1}\succ P_{m-1})\succ (x_{t_1}\succ R)+x_{t_1}\succ (P_{m-1}\succ (\overline{u_1}\succ R))=(\overline{u_1}\succ(x_{t_1}\succ P_{m-1}))\succ R \\
    -x_{t_1}\succ ((\overline{u_1}\succ P_{m-1})\succ R)+x_{t_1}\succ (P_{m-1}\succ (\overline{u_1}\succ R)).
\end{multline*}
The monomials $x_{t_1}\succ ((\overline{u_1}\succ B)\succ R)$ and $x_{t_1}\succ (P_{m-1}\succ (\overline{u_1}\succ R))$ correspond to the first case. Notice that for the remaining monomials $(u_1\succ u_2)\succ R$ and $(\overline{u_1}\succ(x_{t_1}\succ P_{m-1}))\succ R$ the number of left adjoint operators decreased to one i.e. each monomial has the form
$$L(v_1,u_3,\ldots,u_k,x)\;\;\;\textrm{and}\;\;\;L(v_2,u_3,\ldots,u_k,x),$$
respectively, where $v_1=u_1\succ u_2$ and $v_2=\overline{u_1}\succ(x_{t_1}\succ P_{m-1})$.

In the same way we consider the case that
$$u_1=x_{k_1}\succ((\cdots(x_{p_1}\succ P_{n-1})\cdots\succ x_{i_2})\succ x_{i_1})$$
and $u_2$ has arbitrary form. By (\ref{id2deg5}), we have
\begin{multline*}
u_1\succ (u_2\succ R)=u_2\succ (u_1\succ R)=
    (u_2\succ u_1)\succ R  \\
    - (u_2\succ \overline{u_{1}})\succ(x_{k_1} \succ R) + \overline{u_{1}}\succ (u_2\succ(x_{k_1} \succ R))=\\
    (u_2\succ u_1)\succ R
    - x_{k_1}\succ((u_2\succ \overline{u_{1}}) \succ R) + x_{k_1}\succ (u_2\succ(\overline{u_{1}} \succ R)),
\end{multline*}
where the second and the third monomials correspond to the first case and the number of left adjoint operator of monomial $(u_2\succ u_1)\succ R$ decreased to one.

Applying the above method several times, we end up with monomials of the form
$$L(x_{s_1},\cdots,x_{s_r},N,x_j)=x_{s_1}\succ(\cdots (x_{s_r}\succ(N\succ x_j))\cdots),$$
where $N$ is a Novikov word of the form (\ref{goodnov1}) and (\ref{goodnov2}).
We obtain the following result:
\begin{lemma}
Every monomial of $\SLC\<X\>$ algebra can be written as a sum of monomials of the following form:
$$x_{s_1}\succ(\cdots (x_{s_r}\succ(N\succ x_j))\cdots),$$
where $N$ is a Novikov word of the form (\ref{goodnov1}) and (\ref{goodnov2}).
\end{lemma}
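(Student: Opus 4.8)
The plan is to assemble the three-case reduction already carried out in the proof of Theorem~\ref{specialidentities} into a single induction on degree, with Lemma~\ref{lem:Lemma2} as the starting point. By Lemma~\ref{lem:Lemma2} every monomial of $\SLC\langle X\rangle$ is a linear combination of words
$$u = L(u_1,\dots,u_k,x) = u_1\succ(u_2\succ(\cdots\succ(u_k\succ x)\cdots)),$$
with $u_j\in N(X)$ a Novikov word of the form~\eqref{generalform} and $x\in X$. I would induct on the degree of $u$; the base case is the bottom of the range handled in Theorem~\ref{specialidentities}, so assume the claim for all smaller degrees.

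First I would clear away the single-generator factors. If $u_i=x_j\in X$ for some $i$, then by~\eqref{id1} we may move $x_j$ to the front, $u=x_j\succ\bigl(u_1\succ(\cdots\succ(u_{i-1}\succ(u_{i+1}\succ\cdots))\cdots)\bigr)$, and the inner word has strictly smaller degree, so the induction hypothesis applies to it; the resulting monomials have the desired shape with $x_j$ adjoined to the list $x_{s_1},\dots,x_{s_r}$. Next, if some $u_i=x_{k_1}\succ(\cdots(x_{k_m}\succ w)\cdots)$ has $m\ge 2$ outer left multiplications by generators, I would use~\eqref{id2} in the degree-$5$ expansion~\eqref{id2deg5} to peel one of the $x_{k_t}$ off to the front of the whole word $u$; each summand produced either has one fewer outer generator inside $u_i$ or is already covered by the previous case, so iterating this brings every $u_i$ to the form~\eqref{goodnov1} or~\eqref{goodnov2}.

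It remains to treat a word $u$ in which every $u_i$ already has the shape~\eqref{goodnov1} or~\eqref{goodnov2} but at least two of the $u_i$, say $u_1$ and $u_2$, are not single generators. Here I would apply~\eqref{id2deg5} repeatedly exactly as in Case~3 of the proof of Theorem~\ref{specialidentities}: at each step one gets either a summand with a generator pulled to the front (absorbed by the first case and the induction hypothesis) or a summand in which the two complicated factors have been merged into one Novikov word --- $v_1=u_1\succ u_2$, resp.\ $v_2=\overline{u_1}\succ(x_{t_1}\succ P_{m-1})$ --- so that the number of non-generator factors in the word strictly decreases. Taking as induction measure the pair (number of non-generator factors $u_i$, der-lex data of those factors), every rewriting step strictly lowers the measure, the process terminates, and what survives is precisely a monomial $x_{s_1}\succ(\cdots(x_{s_r}\succ(N\succ x_j))\cdots)$ with $N$ of the form~\eqref{goodnov1} or~\eqref{goodnov2}, as claimed.

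The main obstacle is the termination bookkeeping in this last step: one must check that each of the finitely many rewriting patterns coming from~\eqref{id1}, \eqref{id2}, \eqref{id3} --- in particular the merging $u_1\succ u_2$ and its $\overline{u_1}$-variant --- strictly decreases the chosen well-founded measure and creates no new non-generator factor elsewhere in the word, and that the summands said to ``correspond to the first case'' really do drop in degree in the relevant component. Once this is verified the induction closes and the Lemma follows.
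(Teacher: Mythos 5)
Your proposal follows essentially the same route as the paper: start from Lemma~\ref{lem:Lemma2}, pull single-generator factors $u_i=x_j$ to the front with~\eqref{id1}, peel outer generators off factors with $m\ge 2$ via~\eqref{id2} and~\eqref{id2deg5}, and then merge the remaining non-generator factors pairwise with~\eqref{id2deg5} until only one Novikov word $N$ of the form~\eqref{goodnov1} or~\eqref{goodnov2} survives. Your explicit well-founded measure (number of non-generator factors, plus auxiliary data) makes the termination argument slightly more transparent than the paper's ``applying the above method several times,'' but the substance is identical.
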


Now, we consider the monomials of $\SLC\<X\>$ of the form
$$u=u_1\succ x.$$
Let us prove that
$$u=\sum_i \alpha_i p_i,\;\;\; p_i\in [P(X)].$$

Firstly, we consider the case that $u_1$ has the form $(\ref{goodnov2})$. By (\ref{normalmonomialderperm}) and (\ref{generalform}), $u\notin P(X)$ if and only if $p=1$ and $k_1>j_1$:
$$u=(x_{k_1}\succ ((x_{j_1}\succ P_{n-1})\succ x_{i_1}))\succ x\notin [P(X)].$$
Notice that the identity 
\begin{multline*}
    (x_{k_1}\succ ((x_{j_1}\succ P_{n-1})\succ x_{i_1}))\succ x=
    (x_{j_1}\succ ((x_{k_1}\succ P_{n-1})\succ x_{i_1}))\succ x-\\
    x_{j_1}\succ (((x_{k_1}\succ P_{n-1})\succ x_{i_1})\succ x) 
    -x_{k_1}\succ ((x_{j_1}\succ (P_{n-1}\succ x_{i_1}))\succ x) \\
    +x_{k_1}\succ (x_{j_1}\succ ((P_{n-1}\succ x_{i_1})\succ x))
    +x_{k_1}\succ (((x_{j_1}\succ P_{n-1})\succ x_{i_1})\succ x) \\
    +x_{j_1}\succ ((x_{k_1}\succ (P_{n-1}\succ x_{i_1}))\succ x)
    -x_{j_1}\succ (x_{k_1}\succ ((P_{n-1}\succ x_{i_1})\succ x))
\end{multline*}
holds in $\SLC\<X\>$ algebra. It can be proved using computer algebra as software programs  Albert \cite{Albert}. The first monomial of the right side belongs to $[P(X)]$ and the other monomials correspond to the first case. So, $u$ can be written as a sum of monomials of $[P(X)]$.

%\begin{multline*}
%    \tau(u)=(x_{k_1}''x_{j_1}''P_{n-1} x_{i_1}+x_{k_1}' x_{j_1}^{(3)} P_{n-1} x_{i_1}+x_{k_1}' x_{j_1}'' P_{n-1}' x_{i_1}+x_{k_1}' x_{j_1}'' P_{n-1} x_{i_1}'\\
%    +x_{k_1}'' x_{j_1}' P_{n-1}' x_{i_1}+x_{k_1}' x_{j_1}'' P_{n-1}' x_{i_1}+x_{k_1}' x_{j_1}' P_{n-1}'' x_{i_1}+x_{k_1}' x_{j_1}' P_{n-1}' x_{i_1}')x
%\end{multline*}
%every monomial contains a generator of differential degree $1$ except the first one. Rewriting that expression in terms of $\succ$ in two different way, we obtain the following identity of degree $5$ in $\Der\Perm_{\succ}\<X\>$ algebra:
%$$u=v+x_{k_1}\succ(\sum_r\lambda_r a_r ) + x_{j_1}\succ(\sum_s\lambda_s b_s),$$
%where $v=(x_{j_1}\succ ((x_{k_1}\succ P_{n-1})\succ x_{i_1}))\succ x\in P(X)$ and other monomials correspond to the first case.
%Using computer algebra as software programs  Albert \cite{Albert} it is not difficult to verify that identity holds in $\SLC\<X\>$ algebra.

It remain to consider the case that $u_1$ has the form $(\ref{goodnov1})$. By (\ref{normalmonomialderperm}) and (\ref{generalform}), we may assume that
$$u_1=(\cdots((x_{j}\succ P_{n-1})\succ x_{i_r}) \cdots\succ x_{i_2})\succ x_{i_1}$$
and
$$P_{n-1}=(\cdots((x_{t}\succ P_{n-2})\succ x_{l_k})\cdots\succ x_{l_2})\succ x_{l_1},$$
where $k>r$, or $k=r$ and $t>j$. If $k>r+2$, or $k=r+1$ and $t>j$ then
$$u_1\succ x\in [P(X)].$$
To save space we omit the multiplication $\succ$.
We have only two cases for which
$$u_1\succ x\notin [P(X)].$$
There are $k=r$, or $k=r+1$ and $t<j$. Let us start the second induction on $r$ for the monomial $u_1$.
The base of induction is 
$r=0$ and  this case is considered above i.e.
$$u=u_1\succ x=(x_j\succ ((\cdots((x_{t}\succ P_{n-2})\succ x_{l_k})\cdots\succ x_{l_2})\succ x_{l_1}))\succ x.$$
The $``\equiv"$ symbol means that we are excluding all monomials for which $r$ is smaller than given.

%If $P_{n-1}=x_t$ then $u\in P(X)$. If $P_{n-1}=x_t\succ P_{n-2}$ then by (\ref{id2}) and by the first case
%$$(x_j(x_tP_{n-2}))x=x_j((x_tP_{n-2})x)+x_t((x_jP_{n-2})x)-x_j(x_t(P_{n-2}x))\in P(X).$$

Now, we are ready to consider the general case of $u$.
By (\ref{id2}) we rewrite the monomial $u$ as follows:
\begin{multline*}
    u_1 x=((\cdots((x_{j} P_{n-1}) x_{i_r}) \cdots x_{i_2}) x_{i_1}) x\equiv  \\
    (((\cdots(x_{j} x_{i_r}) \cdots x_{i_2}) x_{i_1}) P_{n-1}) x=  \\
    ((\cdots(x_{j} x_{i_r}) \cdots x_{i_2}) x_{i_1}) (((\cdots((x_{t} P_{n-2}) x_{l_k})\cdots x_{l_2}) x_{l_1}) x)+  \\
    (\cdots((x_{t} P_{n-2}) x_{l_k})\cdots x_{l_2}) ((((\cdots(x_{j} x_{i_r}) \cdots x_{i_2}) x_{i_1}) x_{l_1}) x)+  \\
    x_{l_1}((\cdots((x_{t} P_{n-2}) x_{l_k})\cdots x_{l_2})(((\cdots(x_{j} x_{i_r}) \cdots x_{i_2}) x_{i_1}) x)).
\end{multline*}
The last term correspond to the first case. For the first and second terms we use (\ref{id1}) and (\ref{id2deg5}), and obtain
\begin{multline*}
    ((\cdots((x_{t} P_{n-2}) x_{l_k})\cdots x_{l_2}) x_{l_1}) (((\cdots(x_{j} x_{i_r}) \cdots x_{i_2}) x_{i_1}) x)+  \\
    (((\cdots(x_{j} x_{i_r}) \cdots x_{i_2}) x_{i_1}) x_{l_1}) ((\cdots((x_{t} P_{n-2}) x_{l_k})\cdots x_{l_2}) x)=  \\
    (((\cdots((x_{t} P_{n-2}) x_{l_k})\cdots x_{l_2}) x_{l_1}) ((\cdots(x_{j} x_{i_r}) \cdots x_{i_2}) x_{i_1})) x-  \\
    (\cdots(x_{j} x_{i_r}) \cdots x_{i_2})
    ((((\cdots((x_{t} P_{n-2}) x_{l_k})\cdots x_{l_2}) x_{l_1}) x_{i_1}) x) \\
    +x_{i_1}((\cdots(x_{j} x_{i_r}) \cdots x_{i_2})(((\cdots((x_{t} P_{n-2}) x_{l_k})\cdots x_{l_2}) x_{l_1})x))+ \\
    ((((\cdots(x_{j} x_{i_r}) \cdots x_{i_2}) x_{i_1}) x_{l_1}) (\cdots((x_{t} P_{n-2}) x_{l_k})\cdots x_{l_2})) x- \\
    (\cdots((x_{t} P_{n-2}) x_{l_k})\cdots x_{l_3}) (((((\cdots(x_{j} x_{i_r}) \cdots x_{i_2}) x_{i_1}) x_{l_1})x_{l_2}) x)+  \\
    x_{l_2}((((\cdots(x_{j} x_{i_r}) \cdots x_{i_2}) x_{i_1}) x_{l_1})((\cdots((x_{t} P_{n-2}) x_{l_k})\cdots x_{l_3}) x)).
\end{multline*}
The third and sixth terms correspond to the first case. The first term we rewrite as follows: 
\begin{multline*}
    (((\cdots((x_{t} P_{n-2}) x_{l_k})\cdots x_{l_2}) x_{l_1}) ((\cdots(x_{j} x_{i_r}) \cdots x_{i_2}) x_{i_1})) x\equiv \\
    ((\cdots(x_{j} x_{i_r}) \cdots x_{i_2}) (((\cdots((x_{t} P_{n-2}) x_{l_k})\cdots x_{l_2}) x_{l_1}) x_{i_1})) x\equiv \\
    ((\cdots((x_{j}(((\cdots((x_{t} P_{n-2}) x_{l_k})\cdots x_{l_2}) x_{l_1}) x_{i_1})) x_{i_r}) \cdots x_{i_2})) x \in [P(X)].
\end{multline*}
For the second, forth and fifth terms we use the same rewriting method as given above and finally obtain representation in terms of $[P(X)]$.
\end{proof}

\end{document}